\newcommand{\N}{\mathbb{N}}
\newcommand{\R}{\mathbb{R}}
\newcommand{\Z}{\mathbb{Z}}
\newcommand{\set}[1]{\left\{#1\right\}}
\renewcommand{\d}{{\rm{d}}}
\newtheorem{theorem}{Theorem}
\newtheorem*{theorem*}{Theorem}
\newtheorem{lemma}{Lemma}
\newtheorem{proposition}{Proposition}
\newtheorem*{proposition*}{Proposition}
\theoremstyle{definition}
\newtheorem{remark}{Remark}
\numberwithin{equation}{section}
\begin{document}

\title[Directed polymer with unbounded jumps]
{Limiting results for the free energy of directed polymers in 
random environment with unbounded jumps}

\author{Francis Comets}
\address[Francis Comets]
{Laboratoire Probabilit\'es et Mod\'elisation Al\'eatoire, 
Universit\'e Paris Diderot - Paris 7, Paris, France}
\email{comets@math.univ-paris-diderot.fr}

\author{Ryoki Fukushima} 
\address[Ryoki Fukushima]
{Research Institute in Mathematical Sciences, 
Kyoto University, Kyoto, Japan}
\email{ryoki@kurims.kyoto-u.ac.jp}

\author{Shuta Nakajima} 
\address[Shuta Nakajima]
{Research Institute in Mathematical Sciences, 
Kyoto University, Kyoto, Japan}
\email{njima@kurims.kyoto-u.ac.jp}

\author{Nobuo Yoshida}
\address[Nobuo Yoshida]
{Graduate School of Mathematics, Nagoya University, Nagoya, Japan}
\email{noby@math.nagoya-u.ac.jp }

\date{\today}

\keywords{directed polymer, random environment, first passage percolation, ground states, zero temperature.}
\subjclass[2010]{Primary 60K37; secondary 60K35; 82A51; 82D30}

\begin{abstract}
We study asymptotics of the free energy for the
directed polymer in random environment. 
The polymer is allowed to make unbounded jumps and the environment
is given by Bernoulli variables. 
We first establish the existence and continuity of the free energy 
including the negative infinity value of the coupling constant $\beta$. 
Our proof of existence at $\beta=-\infty$ differs from existing
ones in that it avoids the direct use of subadditivity. 
Secondly, we identify the asymptotics of the free energy at 
$\beta=-\infty$ in the limit of the success probability 
of the Bernoulli variables tending to one. 
It is described by using the so-called time constant of a certain 
directed first passage percolation. 
Our proof relies on a certain
continuity property of the time constant, which is of independent 
interest. 
\end{abstract}

\maketitle

\section{Introduction and main results}

The directed polymer in random environment is a statistical 
physics model of a polymer in disordered solvent. 
In the discrete set-up, the polymer chain is a random walk 
$((X_n)_{n\ge 0}, P)$ on $\Z^d$ starting at the origin 
and the random environment is modelled by independent and identically
distributed random variables 
$((\eta(j,x))_{(j,x)\in \N\times \Z^d}, Q)$. 
We introduce the Hamiltonian $H_n^\eta=\sum_{j=1}^n\eta(j,X_j)$ 
and, for a given inverse temperature $\beta\in\R$, 
define the finite volume Gibbs measure by
\begin{equation}
\d \mu_n^{\eta,\beta}=\frac1{Z_n^{\eta,\beta}}
\exp\{\beta H_n^{\eta}\}\d P,
\label{polymer measure}
\end{equation}
where 
$Z_n^{\eta,\beta}=P[\exp\{\beta H_n^\eta\}]$ 
is the partition function
with $P[\cdot]$ denoting the expectation with respect to $P$.
When $\beta>0$, the polymer is attracted by large values of $\eta$ 
and repelled by negative values. It is known that this interaction 
causes a localization transition depending on 
the law of the random walk~\cite{CY06}. 

A quantity of particular importance in this model is the free energy 
\begin{equation*}
\varphi(\beta)
=\lim_{n\to\infty}\frac{1}{n}\log Z_n^{\eta,\beta}
\end{equation*}
whose existence is usually established by a subadditivity argument.
It is for instance believed that the difference between 
$\varphi(\beta)$ and the so-called annealed free energy characterizes
the localized/delocalized phases. 
See~\cite{CH02,CSY03,CC13,CY13}
for rigorous results in this direction.

\subsection{Zero temperature limits and open paths counting}
One of the main results in the present article is about 
the zero temperature limit of the free energy $\varphi(\beta)$. 
Let us give a few words on the 
{\em motivation}. There has recently been a revival of interest in the 
problem concerning the number of extremal paths in random media 
that dates back to~\cite{Gri83,Dar91}, 
see for example~\cite{CPV08,Yos08,KS10,Lac12,FY12,GGM15} 
for recent works in the directed setup. 
Among others, Garet--Gou\'er\'e--Marchand~\cite{GGM15} have recently 
established the existence of the growth rate of the number of 
open paths in nearest neighbor oriented percolation. 
To be more precise, let 
$N_n$ be a number of open paths of length $n$ starting from 
$(0,0)\in\N\times\Z^d$. Then assuming that the percolation takes 
place with a positive probability, it is proved that
$\lim_{n\to\infty}n^{-1}\log N_n$ exists and is non-random on the event 
of percolation. The main difficulty is that the standard subadditivity 
argument does not work as $\log N_n$ is not well-defined (or
should be defined as $-\infty$) with positive probability, 
making this quantity not integrable. 
One of the motivations of the present work is to propose an approach to
the same problem by considering the zero temperature limit of the 
directed polymer model. Indeed, when the random walk is simple 
nearest neighbor walk and $\eta$ is a Bernoulli variable, 
the above partition function at $\beta=-\infty$ coincides with $(2d)^{-n}N_n$. 
If we are able to prove that the convergence 
\begin{equation*}
\frac1n \log Z_n^{\eta,\beta}\to \frac1n \log N_n-\log 2d
\textrm{ as }\beta\to-\infty
\end{equation*} 
is uniform in $n$ on the event of percolation, then it follows that  
$\lim_{n\to\infty}n^{-1}\log N_n$ exists and is equal to
$\lim_{\beta\to-\infty}\varphi(\beta)+\log 2d$.
In this paper, we carry out this program for random walks with 
stretched-exponential transition probabilities as a test case. 
The unboundedness of jumps simplifies the problem since no percolation 
transition occurs anymore.
However we note that our approach automatically
yields the stronger continuity result of the free energy at 
$\beta=-\infty$. One of the reasons for our rather
special choice of the transition probability is that with this choice, 
the model has a relation to a directed version of the
first passage percolation studied in~\cite{HN97,HN01},
which is interesting in its own right. 
See Theorem~\ref{high-density} below.

We shall comment more on related works in~Subsection~\ref{rel}
after describing our setting and results. 

\subsection{Setting and Results}
Let $(\{X_n\}_{n\in\N}, P_x)$ be the random walk on $\Z^d$ 
starting from $x$ and with the transition probability
\begin{equation*}
 P_x(X_{n+1}=z|X_n=y)=f(|y-z|_1),
\end{equation*}
where $f:\N\cup\{0\}\to (0,1)$ is a function of the form
\begin{equation}
 f(k)=c_1\exp \{-c_2k^\alpha\}, \quad \textrm{ where }  
\alpha>0. \label{f(k)}
\end{equation}
We write $P$ instead of $P_0$ for simplicity.
\begin{remark}
Our choice of the jump law is somewhat arbitrary, and it is tempting to 
replace our specific 
choice with some regular variation assumption on the tail of $\log f(k)$.
It is a purely technical exercise to adapt our method in order to cover 
such cases. To make arguments as transparent as possible we stick to 
this simple law.
\end{remark}
In view of the motivation explained above, we assume 
that $(\{\eta(j,x)\}_{(j,x)\in\N\times\Z^d}, Q)$ 
is independent and identically distributed Bernoulli random variables 
with 
\begin{equation*}
 Q(\eta(0,0)=1)=p\in(0,1).
\end{equation*} 
We define the partition functions at $\beta=-\infty$ by 
\begin{equation*}
 Z_n^{\eta,-\infty}=P(H_n^\eta=0)
\end{equation*}
in addition to the notation introduced before. 
Note that $Z_n^{\eta,-\infty}$ is positive for $Q$-almost every $\eta$,
since the random walk has unbounded jumps. 
It is routine to show  that, 
$Q$-almost surely and for all $\beta\in\R$,
the free energy exists and is equal to the 
second line:
\begin{equation*}
 \begin{split}
\varphi(p,\beta)
&
=\lim_{n\to\infty}\frac{1}{n}\log Z_n^{\eta,\beta}\\
&
=\lim_{n\to\infty}\frac{1}{n}Q[\log Z_n^{\eta,\beta}].
 \end{split}
\end{equation*}
Then, it is plain to see that $\varphi$ is 
non-decreasing 
in $p$ for $\beta>0$, non-increasing in $p$ for $\beta<0$,
non-decreasing and convex in $\beta$, and that
${\varphi}(p,\beta)={\varphi}(1-p,-\beta)+\beta$ for $\beta$ real.
Furthermore, one can show 
by a simple application of the so-called {\em block argument} that 
\begin{equation}
 \lim_{\beta\to-\infty}\varphi(p,\beta)\ge
 \liminf_{n\to\infty}\frac{1}{n}\log Z_n^{\eta,-\infty}>-\infty \;.
\label{lb}
\end{equation}
See Appendix for a proof. 
Our first result shows that the free energy exists and is 
jointly continuous in $(p,\beta)$, including $\beta=-\infty$. 
\begin{theorem}
\label{continuity-FE}
In the above setting with 
$\alpha\in(0,d)$, the limit 
\begin{equation}
\varphi(p,-\infty)=\lim_{n\to\infty}\frac{1}{n}\log Z_n^{\eta,-\infty}
\label{vp(p,-8)}
\end{equation}
exists $Q$-almost surely.
Moreover, the function $\varphi(p,\beta)$ 
is jointly continuous on 
$(0,1]\times [-\infty,\infty)\setminus\{(1,-\infty)\}$.
\end{theorem}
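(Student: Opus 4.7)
I would split the argument into three steps: (1) handle $\beta\in\R$ by classical directed-polymer techniques; (2) extend existence to $\beta=-\infty$ via G\"artner--Ellis applied to $H_n^\eta/n$; (3) assemble joint continuity.

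For (1), the Markov decomposition of the walk makes $n\mapsto Q[\log Z_n^{\eta,\beta}]$ superadditive, so Fekete's lemma yields the existence of $\varphi(p,\beta)=\lim_n n^{-1}\log Z_n^{\eta,\beta}$; Azuma--Hoeffding (with bounded differences of size at most $|\beta|$) upgrades this to $Q$-a.s.\ convergence. Convexity of $\beta\mapsto n^{-1}\log Z_n^{\eta,\beta}$ passes to the limit, giving continuity of $\varphi(p,\cdot)$; continuity of $\varphi(\cdot,\beta)$ in $p$ follows from the monotone thinning coupling $\eta^p(j,x)=\mathbf{1}_{U(j,x)\le p}$; and joint continuity on $(0,1]\times\R$ is then automatic from convexity-in-$\beta$ plus separate continuity.

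For (2), fix $\eta$ in a full-$Q$ set and view $H_n^\eta/n\in[0,1]$ as a random variable under $P$; by (1), its cumulant generating function satisfies $n^{-1}\log P[e^{\beta H_n^\eta}]\to\varphi(p,\beta)$ for every $\beta\in\R$. The limit is convex, nondecreasing, and bounded below by \eqref{lb}, so $\varphi(p,-\infty):=\lim_{\beta\to-\infty}\varphi(p,\beta)=\inf_\beta\varphi(p,\beta)$ is finite. The G\"artner--Ellis theorem then yields a large-deviation principle for $H_n^\eta/n$ under $P$ with rate function $I(x):=\sup_\beta(\beta x-\varphi(p,\beta))$. Since $H_n^\eta$ is integer-valued and non-negative, $\{H_n^\eta=0\}$ is both closed, as $\{H_n^\eta/n\le 0\}$, and coincides with the open slab $\{H_n^\eta/n<1/n\}$, so the LDP yields
\[
\lim_{n\to\infty}\frac{1}{n}\log P(H_n^\eta=0)=-I(0)=\inf_\beta\varphi(p,\beta)=\varphi(p,-\infty),
\]
establishing both the existence of $\lim_n n^{-1}\log Z_n^{\eta,-\infty}$ and its identification with $\lim_{\beta\to-\infty}\varphi(p,\beta)$, without invoking any subadditivity of $n\mapsto\log Z_n^{\eta,-\infty}$.

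For (3) and the main obstacle, joint continuity on the full domain follows by combining (1), the identification in (2), monotonicity of $\varphi$ in $\beta$, and continuity of $\varphi(\cdot,\beta)$ in $p$ via the thinning coupling; the exclusion of $(1,-\infty)$ reflects $Z_n^{\eta,-\infty}\equiv 0$ when $p=1$. The hard part will be the LDP lower bound at $x=0$: since $I$ is minimized at some $x\in(0,1)$, the point $0$ sits at the boundary of the effective domain, where essential smoothness cannot be invoked. One handles this by exhibiting exposed points of $I$ in each neighborhood $(0,\epsilon)$ using a.e.\ differentiability of the convex function $\varphi(p,\cdot)$, together with continuity of $I$ from the right at $0$ (itself a consequence of convexity of $I$ on $[0,1]$ and finiteness of $I(0)$). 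A direct Cram\'er-style tilting argument using the identity
\[
\tfrac{1}{n}\log Z_n^{\eta,-\infty}=\tfrac{1}{n}\log Z_n^{\eta,\beta}+\tfrac{1}{n}\log\mu_n^{\eta,\beta}(H_n^\eta=0)
\]
provides an alternative that bypasses the full LDP formalism, reducing the matter to a quantitative lower bound on the tilted probability $\mu_n^{\eta,\beta}(H_n^\eta=0)$ as $\beta\to-\infty$.
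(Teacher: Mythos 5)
Your step (1) is fine and matches the paper. The real trouble is steps (2)--(3).

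\textbf{The LDP lower bound at $x=0$ does not follow from G\"artner--Ellis.} You correctly obtain the upper bound $\limsup_n n^{-1}\log Z_n^{\eta,-\infty}\le -I(0)=\varphi(p,-\infty)$ from the closed set $\{0\}$ (equivalently, and more simply, from $Z_n^{\eta,-\infty}\le Z_n^{\eta,\beta}$). But your lower bound rests on identifying $\{H_n^\eta=0\}$ with the $n$-dependent set $\{H_n^\eta/n<1/n\}$, and the open-set lower bound in an LDP is a statement about a \emph{fixed} open set $G$. For fixed $\epsilon>0$ you get $\liminf_n n^{-1}\log P(H_n^\eta/n<\epsilon)\ge -\inf_{x<\epsilon}I(x)$, but this controls $P(H_n^\eta\le\epsilon n)$, which is a priori much larger than $P(H_n^\eta=0)$ on the exponential scale; the inclusion goes the wrong way. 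The failure is not a matter of exposed points or essential smoothness at the boundary of the domain of $I$: even a perfect LDP on $\mathbb{R}$ with a smooth, strictly convex $I$ would not give you a lower bound on the probability of a single point. This is the exact place where the paper does real work. Its Proposition~\ref{Key} constructs a path deformation sending any path weighted in $Z_n^{\eta,\beta}$ to one contributing to $Z_n^{\check\eta,-\infty}$, with quantitative control on the deformation cost (Lemma~\ref{small-cost}) and multiplicity (Lemma~\ref{counting}); the hypothesis $\alpha<d$ is used crucially there (finiteness of $P[e^{\Lambda(3\gamma|X_1|^{\alpha-1})}]$). Your proposal never invokes $\alpha<d$, which is a further signal that a key step is missing; as the paper notes, the mere \emph{existence} of $\varphi(p,-\infty)$ holds for all $\alpha>0$ by subadditivity, so the restriction $\alpha<d$ can only be needed for the continuity, and any correct proof of that part must use it somewhere. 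Your final remark about the tilting identity $n^{-1}\log Z_n^{\eta,-\infty}=n^{-1}\log Z_n^{\eta,\beta}+n^{-1}\log\mu_n^{\eta,\beta}(H_n^\eta=0)$ correctly localizes the missing ingredient, but providing the uniform-in-$n$ lower bound on $\mu_n^{\eta,\beta}(H_n^\eta=0)$ as $\beta\to-\infty$ \emph{is} the content of Proposition~\ref{Key}; you have not supplied it.

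\textbf{Joint continuity at $(p,-\infty)$ is not automatic from (1)+(2)+monotonicity.} The thinning-coupling estimate you would use for continuity in $p$ (as in \eqref{eqcont-bdd}) has an error of order $n|\beta|(q-p)$, which degenerates as $\beta\to-\infty$; it gives you nothing at $\beta=-\infty$. Writing $\varphi(p,-\infty)=\inf_\beta\varphi(p,\beta)$ makes it upper semicontinuous in $p$ as an infimum of continuous functions, but lower semicontinuity must be proved, and a Dini-type argument requires first knowing continuity of the limit function, so it is circular. The paper resolves this by making Proposition~\ref{Key} a simultaneous statement in $(q,\beta)$ close to $(p,-\infty)$, from which \eqref{conseq2:key} yields the two-variable oscillation bound in a single stroke. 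Your proposal would need an analogous two-parameter uniform estimate, and you have not sketched one.

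In summary, the plan is a genuinely different route (large deviations / tilting rather than path deformation), and the upper bound and the general architecture are sound, but the two central steps, the exponential lower bound on $P(H_n^\eta=0)$ uniformly in $n$ and the two-variable continuity at $(p,-\infty)$, are asserted rather than proved, and neither follows from the LDP machinery as invoked.
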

It is possible to show the first part for general 
$\alpha\in(0,\infty)$
by using the subadditive ergodic theorem, as in the proof of 
Theorem~2.1 of~\cite{CMS02},
with the help of the fact that 
\begin{equation*}
 Q[|\log Z_n^{\eta,-\infty}|]<\infty.
\end{equation*}
However, we prove it as a part of the proof of continuity result,
avoiding direct use of the subadditive ergodic theorem at 
$\beta=-\infty$.
As explained above, we think this is of technical importance. 
Note that the above integrability condition may break down even 
for a model where there is no percolation transition. The Brownian 
directed polymer in Poissonian medium with $\beta=-\infty$ is 
such an example, as one can easily check by 
considering the event
that there is a Poissonian trap very close to the origin.

Note that at the exceptional point in Theorem~\ref{continuity-FE},
$\varphi$ should be defined as $\varphi(1,-\infty)=-\infty$. 
It is then natural to ask how $\varphi(p,\beta)$ grows as
$(p,\beta)\to (1,-\infty)$. 
Our next result addresses a directional asymptotics. 
Note that $\varphi (\beta, p)$ exists $Q$-a.s.~for all 
$\alpha >0$, as we have just mentioned.
\begin{theorem}
\label{high-density}
In the above setting with 
{$\alpha\in(0,\infty)$}, 
there exists a constant $\mu_1>0$ such that as
$p \uparrow 1$,
\begin{equation} 
 \varphi (p,-\infty)\sim -c_2\mu_1(1-p)^{-\alpha/d}.
\label{eq:HD}
\end{equation}
The constant $c_2$ comes from \eqref{f(k)}, and $\mu_1$ is defined by 
\eqref{mu_p} with $p=1$.
\end{theorem}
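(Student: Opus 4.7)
Using $f(k) = c_1 e^{-c_2 k^\alpha}$, one may write
\begin{equation*}
Z_n^{\eta,-\infty}
= c_1^n \sum_{(x_j) \in \Pi_n(\eta)} \exp\Bigl(-c_2 \sum_{j=1}^n |x_j - x_{j-1}|^\alpha\Bigr),
\end{equation*}
where $\Pi_n(\eta)$ denotes the sequences $0 = x_0, x_1, \ldots, x_n \in \Z^d$ with $\eta(j, x_j) = 0$ for all $j \ge 1$. The plan is first to show that, as $p \uparrow 1$, the logarithm of this partition function is asymptotically $-c_2$ times the per-step minimum $\alpha$-cost of an admissible path (the passage time of a directed first passage percolation), and then to determine the asymptotic growth of this passage time by rescaling and invoking a continuity property of the time constant.

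Let $T_n(p, \eta) = \min_{(x_j) \in \Pi_n(\eta)} \sum_{j=1}^n |x_j - x_{j-1}|^\alpha$, and let $M(p) = \lim_n T_n/n$, which should exist by subadditivity. The lower bound $\varphi(p,-\infty) \ge \log c_1 - c_2 M(p)$ is immediate by restricting the sum above to a near-minimizing path. For the matching upper bound, the key ingredient is that the cardinality of length-$n$ paths in $\Z^d$ with total $\alpha$-cost at most $L$ grows at most like $\exp(C n \log(1 + L/n))$, obtained by bounding each individual jump by $L^{1/\alpha}$ when $\alpha \le 1$ and via H\"older when $\alpha > 1$. Summing the exponential weights over $L \ge T_n$ yields
\begin{equation*}
\varphi(p,-\infty) \le \log c_1 - c_2 M(p) + O(\log M(p)),
\end{equation*}
and since $M(p) \to \infty$ as $p \uparrow 1$, the logarithmic error is negligible compared to $c_2 M(p)$, so $\varphi(p,-\infty) \sim -c_2 M(p)$.

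The remaining task is to prove $M(p) \sim \mu_1 (1-p)^{-\alpha/d}$. I would rescale space by $\lambda = (1-p)^{-1/d}$: on the rescaled lattice of mesh $\lambda^{-1}$, good sites acquire approximately unit density while each $\alpha$-cost is multiplied by $\lambda^\alpha = (1-p)^{-\alpha/d}$, so $\lambda^{-\alpha} M(p)$ is the time constant of the rescaled directed first passage percolation. The claim then reduces to showing that this rescaled time constant converges, as $p \uparrow 1$, to the unit-density continuum time constant $\mu_1$ specified by \eqref{mu_p}. This convergence is exactly the continuity of the time constant highlighted as being of independent interest in the abstract, and is the main obstacle of the proof. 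I expect to establish it via a two-sided approximation, with discretization of near-optimal continuum paths onto the $\lambda^{-1}$-mesh giving $\limsup_{p \uparrow 1} \lambda^{-\alpha} M(p) \le \mu_1$, and a coupling of the Bernoulli environment with a slightly sparser Poisson environment, combined with a concentration estimate for $T_n$, giving the matching lower bound. Combining the three steps yields \eqref{eq:HD}.
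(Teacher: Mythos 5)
Your proposal takes a genuinely different and substantially simpler route to the upper bound than the paper does. The paper uses a Mountford-style block argument: time is cut into blocks of length $R$, one introduces $\epsilon$-good space--time boxes, shows via the concentration estimate of Proposition~\ref{concentration} and a Bernstein-type bound that along any coarse-grained trajectory most boxes are $\epsilon$-good, and then sums over coarse-grained trajectories. You instead propose to bound the total number of length-$n$ paths (admissible or not) at each cost level $L$ and sum the resulting geometric series; since every admissible path has cost at least $T_n$, the overcount contributes only a factor $\exp(O(n\log M(p)))$, negligible next to $\exp(c_2 nM(p))$ once $M(p)\to\infty$. Both reductions bring the problem down to the continuity of the time constant, Theorem~\ref{continuity}. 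Your route avoids the block structure entirely and, since you never need to localize the minimizer, also Proposition~\ref{concentration}; it is a real simplification if carried out correctly.

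However, the proof you sketch of the counting estimate would not work. Bounding each jump by $L^{1/\alpha}$ gives $(2L^{1/\alpha}+1)^{dn}=\exp\bigl(\tfrac{d}{\alpha}n\log L+O(n)\bigr)$, which is \emph{not} $\exp(Cn\log(1+L/n))$: substituting $L\asymp nM(p)$ produces an error term of order $n\log n$, and after dividing by $n$ and letting $n\to\infty$ the resulting upper bound on $\varphi(p,-\infty)$ diverges. The cardinality bound you state is nevertheless correct; the argument that actually produces it is an exponential Chebyshev bound, precisely as in the paper's Lemma~\ref{counting}, except that there the multiplier is taken as $\lambda=\delta^{-1/2}$ for $\delta$ small, whereas you need $\lambda\asymp n/L$ in the regime $L/n$ large, together with $\sum_{z\in\Z^d}e^{-\lambda|z|_1^\alpha}\le C\lambda^{-d/\alpha}$ for small $\lambda$. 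With that repair the outline goes through. One further point you do not address: the positivity $\mu_1>0$ is part of the statement of Theorem~\ref{high-density}, and is also what guarantees $M(p)\to\infty$ so that your $O(\log M(p))$ error is genuinely lower order. The paper proves $\mu_1>0$ by a separate block argument in Remark~\ref{positivity}; your path-counting argument does not give it for free, so some version of that step is still required.
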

\begin{remark}
If we replace $\eta$ by $1-\eta$ and denote the corresponding free 
energy by $\tilde\varphi(p,\beta)$, 
we can deduce  its asymptotics as $\beta\to+\infty$ and $p\downarrow 0$  from 
Theorem~\ref{continuity-FE} and~\ref{high-density} as follows: 
\begin{equation*}
\lim_{\beta\to+\infty}(\tilde\varphi(p,\beta)-\beta)
\textrm{ exists and asymptotic to }
-c_2\mu_1p^{-\alpha/d}\textrm{ as $p\downarrow 0$}. 
\end{equation*} 
\end{remark}
This kind of symptotics are extensively studied in the continuous time
setting, see Subsection~\ref{rel} below. In the discrete time setting, 
however, this is the first result in the same direction to the best
of our knowledge --- possibly because for the common 
nearest neighbor walk model, the high density asymptotics 
at $\beta=-\infty$ is trivial. 
Moreover, we encounter a new directed first passage
percolation model in identifying the constant $\mu_1$ which is 
interesting in its own right. Let us explain how it comes into play.

The asymptotics~\eqref{eq:HD} has a simple heuristic interpretation. 
When $p$ is close to 1, the sites at which $\eta=0$ have low 
density $1-p$ and hence the random walk has to make a jump of 
order $(1-p)^{-1/d}$ at each step to achieve $H_n^\eta=0$. 
The probability of such a path decays like 
$\exp\{-(1-p)^{-\alpha/d}n\}$ and this explains the $p$-dependent
factor. 
In fact, it turns out that the main contribution to the free 
energy comes from the path which carries the highest probability 
and hence the constant $c_2\mu_1$ corresponds to the growth rate
of the minimal cost for the random walk. 

Note that this minimal cost could in principle depend 
on $p$, {but actually it does not, as we will see in the next
theorem. 
There, we prove the continuity
as $p \uparrow 1$ of the time constant of a certain directed first 
passage percolation, a result of independent interest. 
Denote the (scaled) points where the random walk is allowed to}
go by
\begin{equation*}
 \omega_p=\sum_{(k,x)\in \N\times\Z^d}(1-\eta(k,x))
 \delta_{(k,s_p x)}, 
\end{equation*}
{with the natural scaling factor $s_p=(\log\frac1p)^{1/d}\sim (1-p)^{1/d}$ ($p\uparrow 1$). 
With some abuse of notation we will frequently identify $\omega_p$, and more generally any  point measure, with its support.}
Given a realization of $\omega_p$, we define the passage time from $0$
to $n$ by 
\begin{equation} \label{eq:passtime}
T_n(\omega_p)
= \min\set{\sum_{k=1}^n |x_{k-1}-x_k|^\alpha:
 x_0=0\textrm{ and }\{(k,x_k)\}_{k=1}^n\subset \omega_p}. 
\end{equation} 
Then, a direct application of the 
subadditive ergodic theorem shows that the  limit 
\begin{equation} 
\mu_p=\lim_{n\to\infty}{1 \over n}T_n(\omega_p)
\label{mu_p}
\end{equation} 
exists $Q$-almost surely. The limit $\mu_p$, so-called time constant, is deterministic. 
In these terms, the maximal probability of paths
satisfying $H_n^\eta=0$ is expressed as 
\begin{equation*}
c_1^n\exp\{-c_2s_p^{-\alpha}T_n(\omega_p)\}
=\exp\{-c_2\mu_p(1-p)^{-\alpha/d}n(1+o(1))\}.
\end{equation*}
Now note that $\omega_p$ converges as $p\uparrow 1$ to the Poisson point
process $\omega_1$ on $\N\times \R^d$ whose intensity is the product 
of the counting measure and Lebesgue measure. 
Observe also that definition (\ref{eq:passtime}) makes perfect sense 
when $p=1$, yielding a limit $\mu_1$ in \eqref{mu_p}.
In the next result we claim that the time constant of the Bernoulli model converges to that of the Poisson 
model as $p \uparrow 1$.
\begin{theorem}[{Continuity of the time constant}]
\label{continuity} We have 
$$\lim_{p\uparrow 1}\mu_p=\mu_1.$$
\end{theorem}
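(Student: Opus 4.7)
The plan is to construct an explicit coupling of $\omega_p$ with $\omega_1$ that matches their local structure, then transfer near-optimal paths between the two passage-time problems at a controlled cost. Partition $\R^d$ into half-open cubes $C_z = s_p z + [0,s_p)^d$ indexed by $z\in\Z^d$; each has Lebesgue volume $s_p^d=\log(1/p)$ and diameter $s_p\sqrt{d}$. Construct $\omega_p$ from $\omega_1$ by declaring $(k,s_p z)\in\omega_p$ if and only if $\omega_1\cap(\{k\}\times C_z)\neq\emptyset$. Since disjoint cubes give independent Poisson counts, the resulting events are independent in $(k,z)$, and each has probability $1-e^{-s_p^d}=1-p$, so this coupling faithfully reproduces the Bernoulli law defining $\omega_p$.

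For the upper bound, let $(y_0=0,y_1,\ldots,y_n)$ be a minimizer of $T_n(\omega_1)$ and let $z_k\in\Z^d$ be the unique index with $y_k\in C_{z_k}$. By construction $(k,s_p z_k)\in\omega_p$, so setting $x_0=0$ and $x_k=s_p z_k$ for $k\geq 1$ produces an admissible path for $T_n(\omega_p)$ with $|x_k-y_k|\leq s_p\sqrt{d}$. Combining the triangle inequality with the elementary bound $(a+b)^\alpha\leq(1+\epsilon)a^\alpha+C_{\alpha,\epsilon}b^\alpha$---valid for every $\alpha>0$ and $\epsilon>0$, either by subadditivity ($\alpha\leq 1$) or by splitting $\{a\geq Ab\}$ versus $\{a<Ab\}$ with $A$ large ($\alpha>1$)---one obtains
\[
T_n(\omega_p)\leq \sum_{k=1}^n|x_{k-1}-x_k|^\alpha \leq (1+\epsilon)\,T_n(\omega_1)+C_{\alpha,\epsilon}\,n\,(2s_p\sqrt{d})^\alpha.
\]
Dividing by $n$, sending $n\to\infty$ (the limits are $\mu_p$ and $\mu_1$ by subadditive ergodicity), then $p\uparrow 1$ (so $s_p\to 0$), and finally $\epsilon\downarrow 0$ yields $\limsup_{p\uparrow 1}\mu_p\leq\mu_1$.

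The lower bound is fully symmetric. Given a minimizer $(0,s_p z_1,\ldots,s_p z_n)$ of $T_n(\omega_p)$, the coupling guarantees that for each $k\geq 1$ there exists at least one point $y_k\in C_{z_k}$ with $(k,y_k)\in\omega_1$; pick any such one. Then $(0,y_1,\ldots,y_n)$ is admissible for $T_n(\omega_1)$ with $|y_k-s_p z_k|\leq s_p\sqrt{d}$, and the same elementary inequality gives $T_n(\omega_1)\leq(1+\epsilon)T_n(\omega_p)+C_{\alpha,\epsilon}n(2s_p\sqrt{d})^\alpha$, whence $\mu_1\leq(1+\epsilon)\liminf_{p\uparrow 1}\mu_p$. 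Letting $\epsilon\downarrow 0$ finishes the proof.

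The main obstacle is conceptual: identifying a coupling that sits both processes on the same probability space in a way that respects their local geometry, so that a path in one projects onto a path in the other with only $O(s_p)$ positional error per step. Once this is in hand the argument reduces to the one-line perturbation estimate above. The only subtlety in the execution is that for $\alpha>1$, $x^\alpha$ is convex and pure subadditivity fails, which forces use of the $(1+\epsilon)$ multiplicative slack and the ordered limit $p\uparrow 1$ before $\epsilon\downarrow 0$.
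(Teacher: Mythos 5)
Your proof is correct and follows essentially the same route as the paper: you set up the identical scaling coupling between $\omega_p$ and the Poisson process $\omega_1$ (cubes $s_p z+[0,s_p)^d$), transfer a minimizing path from one process to the other at positional cost $O(s_p)$ per step, and control the resulting perturbation of $\sum|x_{k-1}-x_k|^\alpha$ with an elementary inequality. The only cosmetic difference is the packaging of the $\alpha>1$ case: the paper uses $(t+s)^\alpha\le(1+s)^{\alpha-1}(t^\alpha+s)$ with $s=O(s_p)$ so the multiplicative slack vanishes automatically as $p\uparrow 1$, whereas you use a fixed $(1+\epsilon)$ slack and take $\epsilon\downarrow 0$ after $p\uparrow 1$; both yield the same conclusion.
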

\begin{remark}
A similar continuity of the time constant is known for lattice first 
passage percolation in greater generality, see~\cite{Cox80,CK81}
and~(6.9) in~\cite{Kes86}. 
\end{remark}
\subsection{Related works}\label{rel}
The main part of Theorem~\ref{continuity-FE} is the continuity
of $\varphi(p,\beta)$ around $\beta=-\infty$, which is 
the zero temperature asymptotic result for the free energy. 
This type of problems does not seem
to attract much interest in the discrete time setting since in 
some cases the answers are simple.  
For instance, consider the (nearest-neighbor) 
simple random walk model with an i.i.d.~random 
environment with $Q(\eta(0,0)>0)>0$. 
Then it is easy to see that as $\beta\to +\infty$, the free energy
is asymptotic to $\beta$ times the time constant of the directed last 
passage percolation. 
However, if $\eta$ is Bernoulli distributed and we send 
$\beta\to -\infty$, the situation is not so simple. As we mentioned 
at the beginning, the existence of $\varphi(-\infty)$ proved 
in~\cite{GGM15} is already highly nontrivial and the continuity 
as $\beta\to-\infty$ remains an open question at the moment. 

For the continuous time polymer models, the asymptotics of the 
free energy is far from being simple. 
Continuous time random walk models, known under the name of 
parabolic Anderson model, have attracted enormous attention.
Carmona--Molchanov  in the  seminal work~\cite{CM94} initiated 
this line of research. They mainly 
studied the case when the environment is a space--time Gaussian white 
noise and their results include non-matching upper and lower bounds for the 
free energy when the jump rate of the random walk tends to zero. 
Note that this limit is similar to that in Theorem~\ref{high-density} 
in spirit 
since in both cases, the random walk is forced to make more jumps 
than it typically does. 
Shiga~\cite{Shi97} proved similar results for the space--time Poissonian 
environment at $\beta=-\infty$. 
In fact, both~\cite{CM94} and~\cite{Shi97} only proved the
existence of the free energy in the sense of a $L^1$ limit.
These results were later refined and 
extended in~\cite{Mou01,CMS02,CMS05,CM06}
and almost sure existence of the free energy was established
in~\cite{CMS02,CMS05}. 
Finally, the sharp equivalent for the free energy as the jump rate 
vanishes was obtained in \cite{CKM01,CMS02}
in terms of the time constant of a last passage percolation problem. 
Note that for the Gaussian white noise environment, the above asymptotics
is readily translated to the $\beta\to\pm\infty$ limit by using a
scaling identity (see Chapter~IV of \cite{CM94}). 
On the other hand, in the Poissonian environment case, these 
zero temperature limits are of independent interest but 
have not been considered yet. 
In particular, we expect 
that the continuity similar to Theorem~\ref{continuity-FE} holds
when $\beta\to-\infty$. 

Another continuous time polymer model is Brownian directed polymer in 
Poissonian environment introduced by Comets--Yoshida~\cite{CY05}. 
The $\beta\to +\infty$ limit was studied in the same paper, as well as 
$\beta\to-\infty$ for $d\ge 3$ with a specific choice of the other 
parameters. 
It is possible to show by a block argument that the finite volume 
free energy stays bounded as $\beta\to-\infty$ in general 
but, to the best of our knowledge, the existence of the limit at 
$\beta=-\infty$ is not known. 
Later in~\cite{CY13}, the asymptotics as the density of the Poisson 
point process tends to $\infty$ was also studied but only for 
bounded $\beta$, in contrast to Theorem~\ref{high-density} here. 

Finally, we mention that some \emph{solvable models} have been
found recently, see, e.g., 
Moriarty--O'Connell~\cite{MO07}, Amir--Corwin--Quastel
~\cite{ACQ11} and Sepp{\"a}l{\"a}inen~\cite{Sep12}.
{In these models the free energy can be explicitly computed,
thus allowing to study various asymptotics. }
But we refrain from explaining the details of these results 
since such examples have been found only in $(1+1)$-dimension so far 
and also the techniques employed are quite different from ours. 

\subsection{Organization of the paper}
The rest of the paper is organized as follows. 
Section~\ref{zero-temp} is devoted to the proof of 
Theorem~\ref{continuity-FE}. 
For $\beta\in\R$, the continuity is relatively easy and 
the essential part is the proof of continuity around $\beta=-\infty$.
The basic strategy is to introduce a deformation of the path with a
quantitative control of the resulting error.
In Section~\ref{FPP}, we prove Theorem \ref{continuity}, 
as well as a concentration
result which is used in the proof of Theorem~\ref{high-density}. 
Finally, we prove Theorem~\ref{high-density} in
Section~\ref{HD}, by showing that the heuristic computation given 
below Remark 1.4
is indeed correct. There, we closely follow arguments of 
Mountford~\cite{Mou01}. 
\section{Proof of Theorem~\ref{continuity-FE}}\label{zero-temp}
\begin{proof}[Proof of Theorem~\ref{continuity-FE}]
Note first that  continuity in $\beta\in(-\infty,\infty)$ 
follows from convexity of $\varphi(p,\cdot)$. 
Next, we verify the continuity in $p$, locally 
uniformly in $\beta$, cf.~\eqref{eqcont-bdd} below. 
For this purpose, we take arbitrary $0<p<q \le 1$, and introduce another
family of independent and identically distributed Bernoulli variables
$(\{\zeta(j,x)\}_{(j,x)\in\N\times\Z^d}, Q')$
with $Q'(\zeta(0,0)=1)=(q-p)/(1-p)$ and define 
$\check{\eta}=\eta\vee\zeta$. 
Then, $(\{\check{\eta}(j,x)\}_{(j,x)\in\N\times\Z^d},Q\otimes Q')$ 
is a collection of
Bernoulli random variables with success probability $q$ and we 
are going to estimate
\begin{equation*}
Q\otimes Q'\left[
\log Z_n^{\check{\eta},\beta}-\log Z_n^{\eta,\beta}\right]
=Q\otimes Q'\left[
\log \mu_n^{\eta,\beta}
\left[\exp\{\beta H_n^{\check{\eta}-\eta}\}\right]\right],
\end{equation*}
where $\d \mu_n^{\eta,\beta}=(Z_n^{\eta,\beta})^{-1}
\exp\{\beta H_n^{\eta}\}\d P$
is the polymer measure. 
For positive $\beta$, we have by Jensen's inequality that
\begin{equation*}
\begin{split}
0 &\le  Q\otimes Q'\left[
 \log \mu_n^{\eta,\beta}
 \left[\exp\{\beta H_n^{\check{\eta}-\eta}\}\right]\right]\\
&\le
 \log Q\otimes Q'\left[
 \mu_n^{\eta,\beta}
 \left[\exp\{\beta H_n^{\check{\eta}-\eta}\}\right]\right]\\
&\le \log Q\otimes Q'\left[
 \mu_n^{\eta,\beta}
 \left[\exp\{\beta H_n^{\zeta}\}\right]\right]\\
&= \log Q\left[ \mu_n^{\eta,\beta} \left[
Q'\left[\exp\{\beta H_n^{\zeta}\}\right]\right]\right]\\
&= n\log [(e^\beta-1)(q-p)+1].
\end{split}
\end{equation*}
For negative $\beta$, we again use Jensen's inequality {for fixed $\eta$ and $\zeta$} to get
\begin{equation*}
\begin{split}
0 &\ge Q\otimes Q'\left[
 \log \mu_n^{\eta,\beta}
 \left[\exp\{\beta H_n^{\check{\eta}-\eta}\}\right]\right]\\
&\ge
 Q\otimes Q'\left[
 \mu_n^{\eta,\beta}
 \left[\beta H_n^{\check{\eta}-\eta}\right]\right]\\
&\ge 
Q\left[ \mu_n^{\eta,\beta}
 \left[Q'\left[\beta H_n^{\zeta}\right]\right]\right]\\
&= n\beta (q-p). 
\end{split}
\end{equation*}
From these estimates, it follows that for any $M>0$, 
\begin{equation}
\begin{split}
&\lim_{q\downarrow p}
\sup_{|\beta|\le M}|\varphi(q,\beta)-\varphi(p,\beta)|\\
&\qquad\qquad =\lim_{q\downarrow p}
\sup_{|\beta|\le M}\lim_{n\to\infty}\frac{1}{n}
\left|Q\otimes Q'\left[
\log Z_n^{\check{\eta},\beta}-\log Z_n^{\eta,\beta}\right]\right|\\
&\qquad\qquad =0\label{eqcont-bdd}
\end{split}
\end{equation}
and the same holds for $\lim_{p\uparrow q}$.
Combining with the continuity 
in $\beta$, we get the joint continuity on 
$(p,\beta)\in (0,1]\times \R$. 

Now we proceed to the main part of the proof, that is, 
the continuity at $\beta=-\infty$. The following is the key estimate.
\begin{proposition}
\label{Key}
Let $\alpha \in (0,d)$, $p\in(0,1)$ and $\epsilon>0$. Then
there exist $r>0$ and $\beta_0<0$ such that 
for all $q\in [p,p+r]$ and $\beta\in[-\infty,\beta_0]$, 
$Q$-almost surely for all 
sufficiently large $n$, 
\begin{equation}
Z_n^{\eta, \beta}\le e^{\epsilon n}Z_n^{\check{\eta},-\infty}.
\label{key}
\end{equation}
\end{proposition}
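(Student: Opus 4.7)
The plan is to deform \emph{every} trajectory $\gamma$ contributing to $Z_n^{\eta,\beta}$ into a trajectory $\tilde\gamma$ with $H_n^{\check\eta}(\tilde\gamma)=0$, with quantitative control of the cost and multiplicity. This differs from the naive split $Z_n^{\eta,\beta}\le Z_n^{\eta,-\infty}+e^\beta$, which is not sufficient since $e^\beta$ is fixed while $Z_n^{\check\eta,-\infty}$ typically decays exponentially in $n$.

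For each $\gamma=(X_0,\ldots,X_n)$, let $S(\gamma):=\set{1\le j\le n:\check\eta(j,X_j)=1}$ be the set of times at which $\gamma$ visits a $\check\eta$-obstacle. At each $j\in S(\gamma)$ I would replace $X_j$ by the lexicographically smallest $\check\eta$-safe site $\tilde X_j$ in the ball of $L^1$-radius $R$ around $X_j$, where $R=R(q)$ is a uniform bound on the distance to the nearest safe site obtained from a Borel--Cantelli argument (since safe sites have positive density $1-q$ and the probability of no safe site in a given ball of radius $R$ is $\le q^{|B_R|}$). The map $\Phi:\gamma\mapsto\tilde\gamma$ satisfies $H_n^{\check\eta}(\tilde\gamma)=0$; by \eqref{f(k)}, each modification alters only two transition probabilities and introduces a loss factor bounded below by $e^{-K_1}$ for some $K_1=K_1(R,\alpha)$. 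Reorganizing the sum $Z_n^{\eta,\beta}=\sum_\gamma P(\gamma)e^{\beta H_n^\eta(\gamma)}$ by the image $\tilde\gamma=\Phi(\gamma)$ and parameterizing pre-images position by position yields
\begin{equation*}
Z_n^{\eta,\beta}\le \sum_{\tilde\gamma:\,H_n^{\check\eta}(\tilde\gamma)=0}P(\tilde\gamma)\,\prod_{j=1}^n W_j(\tilde X_j),\quad W_j(y):=1+e^{K_1+\beta}N_1(j,y)+e^{K_1}N_0^\zeta(j,y),
\end{equation*}
where $N_1(j,y)$ (resp.\ $N_0^\zeta(j,y)$) is the number of $\eta=1$ (resp.\ $\eta=0,\zeta=1$) sites in the Voronoi cell $V_\Phi(j,y)\subset B_R(y)$ associated with $\Phi$.

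The first contribution to $W_j$ becomes negligible for $\beta\le\beta_0$ sufficiently negative, since $N_1(j,y)\le|B_R|$ and $e^{K_1+\beta_0}|B_R|$ can be made small. For the second, the ergodic mean of $N_0^\zeta(j,y)$ is of order $(q-p)|B_R|$, and a Hoeffding-type concentration (using independence of $\zeta$ across time levels, combined with a covering argument over the admissible trajectories $\tilde\gamma$) yields $\sum_j N_0^\zeta(j,\tilde X_j)\le 2(q-p)n|B_R|$ uniformly in $\tilde\gamma$, eventually $Q\otimes Q'$-a.s. Using $\log(1+x)\le x$, it then follows that $\prod_j W_j\le \exp\!\big(2e^{K_1}(q-p)n|B_R|\big)$, so choosing $r$ so that $2e^{K_1}|B_R|\,r\le\epsilon$ and $\beta_0$ so negative that the $N_1$ contribution is bounded by $\epsilon/2$ gives the claim.

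The main obstacle I anticipate is controlling the per-step constant $K_1$ uniformly when $\alpha\in[1,d)$: the subadditivity $(a+R)^\alpha\le a^\alpha+R^\alpha$ no longer holds, and the log-ratio of a single modified transition depends on the original jump size $|X_{j-1}-X_j|$. The likely fix is a dichotomy between short and long jumps, using the stretched-exponential tail \eqref{f(k)} to show that the contribution from long jumps is exponentially suppressed, so that their aggregate modification cost can still be absorbed into the $(q-p)n$ budget. A secondary technical point is the quenched uniform concentration of $\sum_j N_0^\zeta(j,\tilde X_j)$ over admissible $\tilde\gamma$, since the Voronoi cell $V_\Phi$ itself depends on $\zeta$; this requires either a careful covering of trajectory space or a decomposition of $\Phi$ into an $\eta$-measurable and a $\zeta$-measurable part.
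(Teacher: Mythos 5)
Your overall strategy---deform each trajectory of $Z_n^{\eta,\beta}$ into one with $H_n^{\check\eta}=0$, then control the cost of the deformation and the multiplicity of preimages---is the same as the paper's. But the way you set it up contains a fatal gap at the very first step. You claim a deterministic $R=R(q)$ such that every site in $\N\times\Z^d$ (or at least every site the path can visit) has a $\check\eta$-safe site within $L^1$-distance $R$, obtained ``from a Borel--Cantelli argument.'' No such $R$ exists, and Borel--Cantelli cannot produce one: for a single ball the vacancy probability $q^{|B_R|}$ is small but positive, and the path explores a region of radius polynomial in $n$, so to make the union bound summable you must let $R$ grow like $(\log n)^{1/d}$. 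Then your per-step cost $K_1(R)\sim R^\alpha\sim(\log n)^{\alpha/d}$ diverges, the multiplicity $|B_R|$ diverges, and the estimate $\prod_j W_j\le\exp\{2e^{K_1}(q-p)n|B_R|\}$ produces $\exp\{\omega(n)\}$ instead of $e^{\epsilon n}$. The statement ``the distance to the nearest safe site is uniformly bounded'' is simply false almost surely for i.i.d.\ Bernoulli sites in $\Z^d$.

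The paper does not attempt a pointwise bound on the deformation distance. It defines $d_j=\mathrm{dist}_1(X_j,\{\check\eta(j,\cdot)=0\})$ and an auxiliary Hamiltonian $D_n$ built from the $d_j$'s (including, for $1\le\alpha<d$, a cross term $|X_{j-1}-X_j|^{\alpha-1}(d_{j-1}+d_j)$ coming from the convexity inequality replacing your subadditivity argument). The deformation cost is bounded by $e^{c_3D_n}$, and the key \emph{annealed} estimate is that $Q\otimes Q'\bigl[P[\exp\{\beta H_n^\eta+\gamma D_n\}]\bigr]\le 1$ for $\beta$ negative enough and $q-p$ small enough. The crucial mechanism is that when a site contributes to $D_n$, it contributes $e^\beta$ to $\exp\{\beta H_n^\eta\}$ as well, and the factor $e^\beta$ kills the stretched-exponential tail of $d_j$: the combined one-step moment is $\le 1-q+\delta(\beta,r)e^{\Lambda(\xi)}$, which is $\le 1$ once $\delta(\beta,r)=e^\beta+r$ is small. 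This is exactly the coupling between the Boltzmann weight and the deformation cost that your scheme lacks. The restriction $\alpha<d$ enters precisely here: the function $x\mapsto\Lambda(3\gamma x^{\alpha-1})$ is regularly varying of index $(\alpha-1)d/(d-1)$, which is $<\alpha$ iff $\alpha<d$, ensuring the needed $P$-expectation over jump sizes is finite. You correctly anticipate that the $\alpha\ge1$ case is the real obstacle, but the dichotomy you sketch between short and long jumps does not by itself remove the dependence on the unbounded $R$. Once $D_n\le\delta n$ is enforced, the multiplicity bound is the elementary Lemma~\ref{counting}, with no need for a Voronoi decomposition or a quenched concentration over trajectories.
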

Let us first see how to derive Theorem~\ref{continuity-FE} from
this proposition. 
Since the other direction 
$Z_n^{\eta, \beta}\ge Z_n^{\check{\eta},-\infty}$ is obvious, 
we see that 
\begin{eqnarray}
\varphi(p,\beta) -\epsilon 
& \le &\liminf_{n\to\infty}\frac{1}{n}\log Z_n^{\check{\eta},-\infty} 
\nonumber \\
& \le &\limsup_{n\to\infty}\frac{1}{n}\log Z_n^{\check{\eta},-\infty}  
\le \varphi(p,\beta).
\label{conseq:key}
\end{eqnarray}
This in particular implies (by setting $q=p$) that 
the limit \eqref{vp(p,-8)} exists and equals to 
$\lim_{\beta\to-\infty}\varphi(p,\beta)$. Thus, \eqref{conseq:key} reads:
\begin{equation}
\varphi(p,\beta) -\epsilon \le \varphi(q,-\infty) \le \varphi(p,\beta).
\label{conseq2:key}
\end{equation}
Therefore, it also follows from the monotonicity and \eqref{conseq2:key} that
\begin{equation*}
\begin{split}
&\sup\{
|\varphi(p_1,\beta_1)-\varphi(p_2,\beta_2)|
:{p_1, p_2\in [p,p+r], 
\beta_1, \beta_2\in [\beta_0,-\infty]}\}\\
&\quad \le \varphi(p,\beta_0)-\varphi(p+r,-\infty)\\
&\quad \le 2\epsilon.
\end{split}
\end{equation*}
This, together with \eqref{eqcont-bdd}, completes the proof of 
the joint continuity. 
\end{proof}
\begin{proof}
[Proof of Proposition~\ref{Key}]
Hereafter, we denote $Q\otimes Q'$ by $Q$ for simplicity. 
The basic strategy of the proof is to deform the path appearing in 
the sum
\begin{equation} 
Z_n^{\eta,\beta}=\sum_{x_1,\ldots, x_n}\prod_{j=1}^n
f(|x_{j-1}-x_j|_1)
e^{\beta\eta(j,x_j)}\label{sum}
\end{equation}
to a path $x^*$ which does not hit a site with $\check{\eta}(j,x)=1$ 
and compare the above with 
\begin{equation*}
 \sum_{x_1^*,\ldots, x_n^*}\prod_{j=1}^n
 f(|x_{j-1}^*-x_j^*|_1)\le Z_n^{\eta,-\infty},
\end{equation*}
where the sum runs over all paths which appear as a result of
deformation. To establish \eqref{key}, we need
\begin{enumerate}
\item the deformation costs 
$\prod_{j=1}^n\frac{f(|x_{j-1}-x_j|_1)}{f(|x_{j-1}^*-x_j^*|_1)}$
are negligible;
\item not too many paths are deformed to a single path $x^*$.
\end{enumerate}
Let us start the proper proof. We define $x^*$ as follows:
\begin{equation*}
x_k^*=
\begin{cases}
x_k,& \textrm{ if }\check\eta(k,x_k)=0,\\
{\rm argmin}\{{\rm dist}_1(x,\{x:\check\eta(k,x)=0\})\},
& \textrm{ if }\check\eta(k,x_k)=1,
\end{cases}
\end{equation*}
where if there are several candidates in the second case, we 
choose one by a deterministic algorithm. 
To control the costs of deformation, we define 
\begin{equation*}
 d_j(X_j,\check\eta)={\rm dist}_1(X_j,\{x:\check\eta(j,x)=0\}),
\end{equation*}
where $\rm dist_1$ denotes the $l^1$-distance, 
and introduce an auxiliary Hamiltonian 
\begin{equation*}
D_n(X,\check{\eta})
=\sum_{j=1}^n d_j(X_j,\check\eta)^\alpha
\end{equation*}
for $\alpha< 1$ and 
\begin{equation*}
D_n(X,\check{\eta})
=\sum_{j=1}^n d_j(X_j,\check\eta)^\alpha+
|X_{j-1}-X_j|_1^{\alpha-1}(d_{j-1}(X_{j-1},\check\eta)+d_j(X_j,\check\eta))
\end{equation*}
for $1 \le \alpha <d$ with the convention $d_0(X_0,\check\eta)=0$.
When $\alpha<1$, we use the fact 
$(x+y)^\alpha\le x^\alpha+y^\alpha$ for positive $x, y$
to bound the deformation cost at each step as
\begin{equation}
\begin{split}
 \frac{f(|x_{j-1}-x_j|_1)}{f(|x_{j-1}^*-x_j^*|_1)}
&=\exp\{c_2(|x_{j-1}^*-x_j^*|_1^\alpha-|x_{j-1}-x_j|_1^\alpha)\}\\
&\le \exp\{c_2(|x_{j-1}-x_{j-1}^*|_1^\alpha+|x_j-x_j^*|_1^\alpha)\}.
\label{alpha<1}
\end{split}
\end{equation}
In the other case $1\le \alpha<d$, we instead use convexity to get 
\begin{equation}
\begin{split}
&|x_{j-1}^*-x_j^*|_1^\alpha-|x_{j-1}-x_j|_1^\alpha\\
&\quad\le
 \left[|x_{j-1}-x_j|_1+d_{j-1}(x_{j-1},\check\eta)
+d_j(x_j,\check\eta)\right]^\alpha-|x_{j-1}-x_j|_1^\alpha\\
&\quad\le \alpha\left[|x_{j-1}-x_j|_1+d_{j-1}(x_{j-1},\check\eta)
+d_j(x_j,\check\eta)\right]^{\alpha-1}(d_{j-1}(x_{j-1},\check\eta)
+d_j(x_j,\check\eta))\\
&\quad\le
\alpha 2^\alpha|x_{j-1}-x_j|_1^{\alpha-1}(d_{j-1}(x_{j-1},\check\eta)
+d_j(x_j,\check\eta))\\
&\qquad+\alpha 2^{2\alpha}(d_{j-1}(x_{j-1},\check\eta)^\alpha
+d_j(x_j,\check\eta)^\alpha).
\end{split}
\label{alpha>1}
\end{equation}
Hence in both cases, the total cost is bounded as  
\begin{equation*}
 \prod_{j=1}^n\frac{f(|x_{j-1}-x_j|_1)}
{f(|x_{j-1}^*-x_j^*|_1)}\le e^{c_3D_n}
\end{equation*}
for some $c_3>0$.
\begin{lemma}  
\label{small-cost} Let $\alpha\in(0,d)$.
For any $p\in(0,1)$ and $\delta>0$, 
there exists $r\in(0,1)$ and $\beta_0<0$ such that 
for all $q\in[p,p+r]$ and  $\beta\le\beta_0$,
\begin{equation*}
\lim_{n\to\infty}\frac{1}{Z_n^{\eta,\beta}}
P[\exp\{\beta H_n^\eta\}:D_n\le \delta n]= 1, \quad Q\textrm{-a.s.} 
\end{equation*}
\end{lemma}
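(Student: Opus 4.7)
My strategy exploits the fact that the polymer measure $\mu_n^{\eta, \beta}$ depends on the environment only through $\eta$, not through $\zeta$ or $q$. I will therefore first integrate over $\zeta$, which enters only in $D_n$ via $\check\eta = \eta \vee \zeta$, and then analyse the resulting quantity, which is quenched in $\eta$. Writing $Q'$ for the law of $\zeta$, Fubini yields
\begin{equation*}
Q'\!\left[\mu_n^{\eta, \beta}[D_n > \delta n]\right]
= \mu_n^{\eta, \beta}\!\left[Q'\bigl(D_n(X, \check\eta) > \delta n \bigm| X, \eta\bigr)\right].
\end{equation*}

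The key point is that, for fixed $X$ and $\eta$, the random variables $\{d_j(X_j, \check\eta)\}_{j=1}^n$ are $Q'$-independent, since each depends only on the time slice $\zeta(j,\cdot)$. An exponential Chebyshev inequality then gives, for $\lambda > 0$ and in the case $\alpha < 1$,
\begin{equation*}
Q'\bigl(D_n > \delta n \bigm| X, \eta\bigr)
\le e^{-\lambda \delta n}
\prod_{j=1}^n Q'\!\left[e^{\lambda d_j(X_j, \check\eta)^\alpha} \Bigm| X, \eta\right];
\end{equation*}
for $1 \le \alpha < d$, Young's inequality first absorbs the cross terms $|x_{j-1} - x_j|_1^{\alpha-1}(d_{j-1} + d_j)$ into $d_j^\alpha$'s and $|x_{j-1} - x_j|_1^\alpha$'s, the latter being controlled by the stretched-exponential decay of $f$. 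The assumption $\alpha < d$ ensures that the single-site moment generating function $C = C(\lambda)$ is finite uniformly in space, on a set of $\eta$ of full $Q$-measure (via a standard void-control argument guaranteeing density of $\eta = 0$ near $1 - p$ in all mesoscopic balls). Splitting each factor according to $\eta(j, X_j) = 0$ or $1$, one obtains
\begin{equation*}
\prod_j Q'\!\left[e^{\lambda d_j^\alpha} \Bigm| X, \eta\right]
\le \exp\!\bigl(n\,\mathsf{q}'(C - 1) + (\log C)\, H_n^\eta\bigr),
\qquad \mathsf{q}' = \frac{q - p}{1 - p}.
\end{equation*}

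Averaging under $\mu_n^{\eta, \beta}$ produces the factor $\mu_n^{\eta, \beta}[\exp((\log C) H_n^\eta)] = Z_n^{\eta, \beta + \log C}/Z_n^{\eta, \beta}$. By the continuity of $\varphi(p, \cdot)$ on $\R$ (already established from convexity at the start of the theorem's proof) together with the existence of $\lim_{\beta \to -\infty}\varphi(p, \beta)$ (by monotonicity/convexity and the lower bound \eqref{lb}), this ratio grows at exponential rate $\varphi(p, \beta + \log C) - \varphi(p, \beta)$, which tends to zero as $\beta \to -\infty$. For $r = q - p$ sufficiently small and $\beta$ sufficiently negative, the net exponent
\begin{equation*}
- \lambda \delta + \mathsf{q}'(C - 1) + \bigl[\varphi(p, \beta + \log C) - \varphi(p, \beta)\bigr]
\end{equation*}
is strictly negative, so $Q'[\mu_n^{\eta, \beta}[D_n > \delta n]]$ decays exponentially in $n$. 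A Markov--Borel--Cantelli step in $\zeta$ upgrades this to $Q \otimes Q'$-almost sure convergence to zero, which, under the convention of the enclosing proof, is exactly the desired $Q$-almost sure statement.

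The main obstacle is controlling the ratio $Z_n^{\eta, \beta + \log C}/Z_n^{\eta, \beta}$: the shift $\log C$ is dictated by the tail of $d_j$ and cannot be made small by tuning $\lambda$. The argument genuinely needs the continuity of $\varphi$ on $\R$ together with the existence of its limit at $-\infty$ so that this shift becomes harmless once $\beta$ is very negative. Secondary issues include the uniform void control on $\eta$ needed to make $C(\lambda)$ finite uniformly in space, and, for $\alpha \ge 1$, the absorption of the cross terms from the jump length into the Chebyshev bound.
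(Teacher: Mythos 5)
Your proposal is structurally different from the paper's: you keep $\eta$ quenched and average only over $\zeta$, whereas the paper proves the fully annealed bound $Q\otimes Q'\bigl[P[\exp\{\beta H_n^\eta+\gamma D_n\}]\bigr]\le 1$ and then applies Markov and Borel--Cantelli. The quenched-in-$\eta$ route runs into a genuine obstruction that the paper's annealed route is designed to avoid.

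The gap is in the claim that the single-site conditional moment generating function $C$ is ``finite uniformly in space, on a set of $\eta$ of full $Q$-measure.'' This is not true in the form you need. For a site with $\eta(j,x)=1$ one has $d_j(x,\check\eta)\ge d_j(x,\eta)=\mathrm{dist}_1\bigl(x,\{\eta(j,\cdot)=0\}\bigr)$ deterministically, so $Q'\bigl[e^{\lambda d_j(x,\check\eta)^\alpha}\mid\eta\bigr]\ge e^{\lambda d_j(x,\eta)^\alpha}$. Over the polynomially many sites $(j,x)$ that the polymer can realistically visit up to time $n$, the quantity $\sup d_j(x,\eta)$ is $Q$-a.s. of order $(\log n)^{1/d}$, so the corresponding $C=C(n,\eta)$ is $Q$-a.s. at least $\exp\{\lambda c\,(\log n)^{\alpha/d}\}\to\infty$. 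Your bound $\exp\bigl(n\,\mathsf q'(C-1)+(\log C)H_n^\eta\bigr)$ then degenerates: the per-site factor $\mathsf q'(C-1)$ does not stay small for fixed $r=q-p$, and the shift $\log C$ is not a constant, so the ratio $Z_n^{\eta,\beta+\log C}/Z_n^{\eta,\beta}$ is no longer controlled by a fixed free-energy difference. The paper sidesteps all of this by also averaging over $\eta$: the MGF that appears there is $Q\otimes Q'\bigl[e^{\xi d_j}\mid\check\eta(j,X_j)=1\bigr]\le e^{\Lambda(\xi)}$ with $\Lambda$ a deterministic, regularly varying function of index $d/(d-1)$, and the $\check\eta(j,X_j)=1$ event is cleanly prefactored by $\delta(\beta,r)=e^\beta+r$, which absorbs both the Gibbs penalty $e^\beta$ for $\eta=1$ visits and the small probability $r$ of a $\zeta$-only visit.

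A secondary issue: for $1\le\alpha<d$, absorbing the cross term $|X_{j-1}-X_j|_1^{\alpha-1}(d_{j-1}+d_j)$ by Young's inequality leaves a term $\lambda'\sum_j|X_{j-1}-X_j|_1^\alpha$ inside the Chebyshev exponent, which no longer factors out of the $Q'$-average and must be handled as a tilt of the polymer measure itself; this introduces a positive, $\beta$-independent correction to the free energy that does not vanish as $\beta\to-\infty$ unless $\lambda'$ is also sent to zero, at which point the Chebyshev gain $\lambda\delta$ is lost. The paper instead uses a three-way H\"older split that keeps $|X_{j-1}-X_j|_1^{\alpha-1}$ inside the argument of $\Lambda$, so the relevant $P$-expectation $P\bigl[e^{\Lambda(3\gamma|X_{j-1}-X_j|_1^{\alpha-1})}\bigr]$ is finite precisely because $(\alpha-1)\tfrac{d}{d-1}<\alpha$ when $\alpha<d$. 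This is the sharp point where the hypothesis $\alpha<d$ is used, and it is not recovered by the Young route.
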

\begin{proof}
We give a proof only in the case $1\le\alpha<d$ since the other 
case is easier.  
We show that for any $\gamma>0$, one can find 
$\beta_0$ and $r$ such that 
\begin{equation*}
Q[P[\exp\{\beta H_n^\eta+ \gamma D_n\}]]\le 1
\end{equation*}
for all $q\in[p,p+r]$ and $\beta\le\beta_0$. 
Then it readily follows that $Q$-almost surely, 
\begin{equation*}
 P[\exp\{\beta H_n^\eta+ \gamma D_n\}]\le n^2
\end{equation*}
except for finitely many $n \in N$.
If we take 
$\gamma>\lim_{\beta\to -\infty}|\varphi(p,\beta)|/\delta$,
the right-hand side of
\begin{equation*}
P[\exp\{\beta H_n^\eta\}:D_n>\delta n]
\le e^{-\gamma\delta n}P[\exp\{\beta H_n^\eta+ \gamma D_n\}]
\end{equation*}
is $o(Z_n^{\eta,\beta})$ and we are done. 

Let us fix an arbitrary $\gamma>0$ and we write
\begin{equation}
\label{annealed}
\begin{split}
& Q[P[\exp\{\beta H_n^\eta + \gamma D_n\}]]\\
&\quad = P \Biggl[\prod_{j=1}^nQ\bigl[
\exp\bigl\{\beta\eta(j,X_j)
+\gamma d_j(X_j,\check\eta)^{\alpha}\\
&\qquad+ \gamma 
\left(|X_{j-1}-X_j|_1^{\alpha-1}+|X_j-X_{j+1}|_1^{\alpha-1}\right)
d_j(X_j,\check\eta)
\bigr\}\bigr]\Biggr]
\end{split}
\end{equation}
with the convention $|X_n-X_{n+1}|_1=0$. 
We estimate the last $Q$-expectation by distinguishing the cases
according to the value of $\check\eta(j,X_j)$.
First, if $\check\eta(j,X_j)=0$ then all terms in the 
exponential are zero and, by definition,
\begin{equation*}
Q(\check\eta(j,X_j)=0)=1-q. 
\end{equation*}
Second since $\eta(j,X_j)$ and $d_j(X_j,\check\eta)$
are conditionally independent on $\{\check\eta(j,X_j)=1\}$,
we get for general $\xi>0$, 
\begin{equation*}
\begin{split}
&Q\left[
 e^{\beta \eta(j,X_j)+\xi d_j(X_j,\check\eta)}
 1_{\{\check\eta(j,X_j)=1\}}\right]\\
&\quad= Q\left[
 e^{\beta \eta(j,X_j)}1_{\{\check\eta(j,X_j)=1\}}\right]
Q\left[e^{\xi d_j(X_j,\check\eta)}
\big|\check\eta(j,X_j)=1\right]\\
&\quad\le \delta(\beta,r)
Q\left[e^{\xi d_j(X_j,\check\eta)}
\big|\check\eta(j,X_j)=1\right],
\end{split}
\end{equation*}
where 
$\delta(\beta,r)=e^\beta+r\ge e^\beta
+Q(\eta(j,X_j)=0, \zeta(j,X_j)=1)$.
The upper tail of the distribution of $d_j(X_j,\check\eta)$ 
under $Q(\cdot|\check\eta(j,X_j)=1)$ is bounded as
\begin{equation*}
\begin{split}
&Q(d_j(X_j,\check\eta)>r|\check\eta(j,X_j)=1)\\
&\quad = Q(\check\eta(j,x)=1\textrm{ for }
 1\le |x-X_j|_1\le r)\\
&\quad \le q^{cr^d}.
\end{split}
\end{equation*}
As a consequence, we obtain 
\begin{equation}
Q\left[
 e^{\beta \eta(j,X_j)+\xi d_j(X_j,\check\eta)}\right]
\le 1-q+ \delta(\beta,r)e^{\Lambda(\xi)}
\label{cumulant}
\end{equation}
for some regularly varying function $\Lambda$ of index 
$d/(d-1)$ by a standard Tauberian argument. 
(See, for example,~\cite{Kas78}. In fact, it is easy to check 
this fact directly by a Laplace principle type argument.) 
Similarly it also follows from the assumption $\alpha<d$ that 
\begin{equation*}
Q[e^{\beta\eta(j,X_j)+\xi d_j(X_j,\check\eta)^\alpha}]
<1-q+\delta(\beta,r)\Theta(\xi)
\end{equation*}
for some $\Theta(\xi)<\infty$. 

Now we rewrite the exponential in~\eqref{annealed} as
\begin{equation*}
\begin{split}
&\exp\set{\frac{\beta}{3}\eta(j,X_j)
 +\gamma d_j(X_j,\check\eta)^{\alpha}}\\
&\quad \exp\set{\frac{\beta}{3}\eta(j,X_j)+
 \gamma d_j(X_j,\check\eta)
 |X_{j-1}-X_j|_1^{\alpha-1}}\\
&\qquad \exp\set{\frac{\beta}{3}\eta(j,X_j)+
 \gamma d_j(X_j,\check\eta)
 |X_j-X_{j+1}|_1^{\alpha-1}}
\end{split}
\end{equation*}
and apply H\"older's inequality and~\eqref{cumulant} 
to obtain
\begin{equation*}
\begin{split}
&Q\bigl[
 \exp\bigl\{\beta\eta(j,X_j)
 +\gamma d_j(X_j,\check\eta)^{\alpha}\\
&\qquad+ 
 \gamma d_j(X_j,\check\eta)
 \left(|X_{j-1}-X_j|_1^{\alpha-1}+|X_j-X_{j+1}|_1^{\alpha-1}\right)
 \bigr\}\bigr]\\
&\quad\le \left(1-q+\delta(\beta,r)\Theta(3\gamma)\right)^{1/3}\\
&\qquad 
\left(1-q+\delta(\beta,r)
 e^{\Lambda(3\gamma|X_{j-1}-X_j|_1^{\alpha-1})}
\right)^{1/3}\\
&\qquad\left(1-q+\delta(\beta,r)
 e^{\Lambda(3\gamma|X_j-X_{j+1}|_1^{\alpha-1})} 
\right)^{1/3}.
\end{split}
\end{equation*}
We may drop the first factor on the right-hand side 
since it can be made 
smaller than one by letting $\beta$ be close to $-\infty$ and $r$ 
close to zero. 
We then take the product over $1\le j\le n$ 
and $P$-expectation. Due to the independence of 
$\{X_{j-1}-X_j\}_{j=1}^n$ under $P$, the expectation factorizes 
and the term containing $X_{j-1}-X_j$ is
\begin{equation*}
\begin{split}
&P\left[\left(1-q+\delta(\beta,r)
 e^{\Lambda(3\gamma|X_{j-1}-X_j|_1^{\alpha-1})}
 \right)^{2/3}\right]\\
&\quad\stackrel{\textrm{Jensen}}{\le}
\left(1-q+\delta(\beta,r)P\left[
 e^{\Lambda(3\gamma|X_{j-1}-X_j|_1^{\alpha-1})}
 \right]\right)^{2/3}
\end{split}
\end{equation*}
for $2\le j\le n-1$ and for $j\in\{1,n\}$, the exponent 
$2/3$ is replaced by $1/3$. 
In this way, our problem is reduced to checking that 
\begin{equation*}
 P\left[
 e^{\Lambda(3\gamma|X_{j-1}-X_j|_1^{\alpha-1})}
 \right]<\infty.
\end{equation*}
But the function 
$x\mapsto\Lambda(3\gamma x^{\alpha-1})$ is
regularly varying of index $(\alpha-1)\frac{d}{d-1}<\alpha$ {for $\alpha<d$},
hence the above expectation is finite.
\end{proof}
Due to the above lemma, we can restrict the summation~\eqref{sum} 
to paths with $D_n(x,\check\eta)\le\delta n$ and get
\begin{equation*}
\begin{split}
Z_n^{\eta,\beta} &\sim \sum_{x_1,\ldots, x_n: D_n(x,\check\eta)\le\delta n}
 \prod_{j=1}^n f(|x_{j-1}-x_j|_1)
 e^{\beta\eta(j,x_j)}\\
&=\sum_{x_1,\ldots, x_n: D_n(x,\check\eta)\le\delta n}
\prod_{j=1}^nf(|x_{j-1}^*-x_j^*|_1)
\left[\frac{f(|x_{j-1}-x_j|_1)}{f(|x_{j-1}^*-x_j^*|_1)}
e^{\beta\eta(j,x_j)}\right]\\
&\le e^{c_3\delta n}\sum_{y_1,\ldots, y_n: H_n(y,\check\eta)=0}
\#\{x: x^*=y, D_n(x,\check\eta)\le\delta n\}
\prod_{j=1}^n f(|y_{j-1}-y_j|_1).
\end{split}
\end{equation*}
We are left with estimating the number of paths which are 
deformed to a fixed path. 
\begin{lemma}
\label{counting}
There exists a function $\chi(\delta)\to 0$ as $\delta\downarrow 0$ 
such that for any fixed path 
$(y_1, \ldots, y_n)\in (\Z^d)^n$, 
\begin{equation*}
\#\{x: x^*=y, D_n(x,\check\eta)\le\delta n\}
\le \exp\{\chi(\delta)n\}.
\end{equation*}
\end{lemma}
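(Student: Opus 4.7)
The plan is to turn the combined constraint $D_n(x,\check\eta)\le\delta n$ into a factorized product over time slices via an exponential Chebyshev (Chernoff) inequality, and then count admissible $x_j$'s by enumerating lattice points on $l^1$-spheres.

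The starting observation is that in both the $\alpha<1$ and $1\le\alpha<d$ cases every summand in the definition of $D_n$ is non-negative, so
\begin{equation*}
 D_n(x,\check\eta)\ge\sum_{j=1}^n d_j(x_j,\check\eta)^\alpha.
\end{equation*}
Hence for any $\lambda>0$,
\begin{equation*}
 1_{\{D_n\le\delta n\}}
 \le \exp\Bigl\{\lambda\delta n-\lambda\sum_{j=1}^n d_j(x_j,\check\eta)^\alpha\Bigr\}.
\end{equation*}
Because the condition $x_j^*=y_j$ depends only on $x_j$ (and $\check\eta(j,\cdot)$), summing over $x$ with $x^*=y$ factorizes:
\begin{equation*}
 \#\{x:x^*=y,\,D_n\le\delta n\}
 \le e^{\lambda\delta n}\prod_{j=1}^n\sum_{x_j:x_j^*=y_j}e^{-\lambda d_j(x_j,\check\eta)^\alpha}.
\end{equation*}

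For each $j$ (assuming $\check\eta(j,y_j)=0$, otherwise the inner sum is empty and the bound is trivial), I would parametrize the admissible $x_j$ by $r=|x_j-y_j|_1$; by definition of the deformation this quantity coincides with $d_j(x_j,\check\eta)$. Since the $l^1$-sphere of radius $r$ in $\Z^d$ contains at most $C(1+r)^{d-1}$ lattice points, the inner sum is dominated by
\begin{equation*}
 M(\lambda):=1+C\sum_{r\ge 1}r^{d-1}e^{-\lambda r^\alpha},
\end{equation*}
which is finite for every $\lambda>0$ (as $\alpha>0$) and tends to $1$ as $\lambda\to\infty$ by dominated convergence.

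Putting the pieces together yields the uniform bound $\#\{x:x^*=y,\,D_n\le\delta n\}\le(e^{\lambda\delta}M(\lambda))^n$ valid for every $\lambda>0$. To conclude, I would choose $\lambda=\lambda(\delta)\to\infty$ slowly enough that $\lambda(\delta)\delta\to 0$ as $\delta\downarrow 0$ (for instance $\lambda(\delta)=\delta^{-1/2}$) and define
\begin{equation*}
 \chi(\delta):=\lambda(\delta)\delta+\log M(\lambda(\delta)),
\end{equation*}
so that $\chi(\delta)\to 0$ and $\#\{\cdots\}\le\exp\{\chi(\delta)n\}$. The only substantive input is the pointwise domination $D_n\ge\sum_j d_j^\alpha$, which unifies both ranges of $\alpha$; after that the estimate reduces to a routine exponential moment computation combined with the polynomial surface growth of $l^1$-spheres in $\Z^d$, and no genuine obstacle is expected.
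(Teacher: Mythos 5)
Your proposal is correct and follows essentially the same route as the paper's proof: translate to $z_j=x_j-y_j$ (so $|z_j|_1=d_j$), apply the Chernoff/exponential-Chebyshev trick to the non-negative sum $\sum_j d_j^\alpha\le D_n$, factorize over $j$, and optimize with $\lambda=\delta^{-1/2}$. You are somewhat more explicit than the paper about the lower bound $D_n\ge\sum_j d_j^\alpha$ in the $\alpha\ge1$ case and about why the one-step sum tends to $1$ (sphere-counting and dominated convergence), but the argument is the same.
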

\begin{proof}
We write $z_j=x_j-y_j$. Then it suffices to bound 
\begin{equation*}
\begin{split}
&\#\{(z_j)_{j=1}^n: 
|z_1|_1^\alpha+\cdots+|z_n|_1^\alpha \le \delta n\}\\
&\quad \le e^{\lambda\delta n}
\sum_{z:\,|z_1|_1^\alpha+\cdots+|z_n|_1^\alpha \le \delta n}
e^{-\lambda (|z_1|_1^\alpha+\cdots+|z_n|_1^\alpha)} 
\quad (\lambda>0)\\
&\quad\le \left(\sum_{z \in \Z^d}
e^{\lambda\delta-\lambda|z|_1^\alpha}\right)^n.
\end{split}
\end{equation*}
By taking $\lambda=\delta^{-1/2}$, we find that the right-hand side
is $(1+o(1))^n$ as $\delta\downarrow 0$.
\end{proof}

Combining the above arguments, we can find $r\in(0,1)$ and 
$\beta_0<0$ such that for any $q\in[p,p+r]$ and $\beta<\beta_0$,
\begin{equation*}
\begin{split}
Z_n^{\eta,\beta} &\le
 e^{\epsilon n}\sum_{y_1,\ldots, y_n: D_n(y,\check\eta)=0}
 \prod_{k=1}^n f(|y_{j-1}-y_j|_1)\\
&=e^{\epsilon n}Z_n^{\check\eta,-\infty}
\end{split}
\end{equation*}
for all sufficiently large $n\in\N$. 
\end{proof}

\section{A directed first passage percolation}
\label{FPP}
In this section, we prove Theorem~\ref{continuity}. 
We also prove a concentration bound 
for the passage times,
which is an important ingredient in the proof of 
Theorem~\ref{high-density}. 

For further use, we start by introducing a special realization of $\eta$: recalling that $\eta=\eta_p$ depends in fact on $p$, we define a {\em coupling} of $\eta_p$
for all values of $p \in (0,1)$ as follows.
Let $(Q, \omega_1)$ be the Poisson point process on $\N\times \R^d$ 
whose intensity is the product of the counting measure and Lebesgue measure, and define, for $p \in (0,1)$,
\begin{equation}
 \eta(k,x){= \eta_p(k,x)}=1_{\{\omega_1({\{k\}\times} s_p(x+[0,1)^d))=0\}}
\label{coupling}
\end{equation}
with $s_p=(\log\frac1p)^{1/d} $ the scaling factor.  {Note that $s_p \in (0,\infty)$ and $s_p \to 0$ as $p \uparrow 1$}. 
Let us also introduce 
\begin{equation*}
 \omega_p=\sum_{(k,x)\in \N\times\Z^d}(1-{\eta_p}(k,x))
 \delta_{(k,s_p x)}
\end{equation*}
which vaguely converges to $\omega_1$, $Q$-almost surely {as $p \uparrow 1$}. 
Hereafter, we sometimes identify $\omega_p$ with its support by 
abuse of notation. 
For $0< p \leq 1$,  recall the definition of the passage time from $0$ to $n$, 
\begin{equation*}
T_n(\omega_p)
= \min\set{\sum_{k=1}^n |x_{k-1}-x_k|^\alpha:
 x_0=0\textrm{ and }\{(k,x_k)\}_{k=1}^n\subset \omega_p},
\end{equation*} 
and recall that, by the subadditive ergodic theorem, 
the following limits exist and are equal:
\begin{equation*}
\mu_p=\textrm{a.s.-}\lim_{n\to\infty}{1 \over n}T_n(\omega_p)= 
\inf_{n\in\N} {1 \over n} Q[T_n(\omega_p)]=\lim_{n\to\infty} {1 \over n}Q[T_n(\omega_p)].
\end{equation*} 

\begin{proof}[Proof of Theorem~\ref{continuity}] 
We have the following comparison for the passage times from which 
the result readily follows:
\begin{equation}
\begin{split}
 & T_n(\omega_1) \le (1+\delta_1)T_n(\omega_p)+\delta_2n,\\
 & T_n(\omega_p) \le (1+\delta_1)T_n(\omega_1)+\delta_2n, 
\end{split}
\label{comparison}
\end{equation}
where $\delta_1, \delta_2\to 0$ as {$p\uparrow 1$}. 
We only prove the first one since the argument for the other is
the same. Let $(\pi_n(m))_{m=0}^n$ be a minimizing path for 
$T_n(\omega_p)$. Then, by definition, each $\pi_n(m)+[0,s_p)^d$
contains a point of $\omega_1$. Thus we can find another path
$\{\pi_n'(m)\}_{m=0}^n$ such that  
\begin{equation*}
\pi_n'(0)=0, \pi_n'(m)\in \omega_1 \textrm{ and }
|\pi_n(m)- \pi_n'(m)|_1 \le ds_p
\end{equation*}
for $1\le m\le n$. 
Then, we have 
\begin{equation*}
|\pi_n'(m-1)- \pi_n'(m)|_1
\le |\pi_n(m-1)- \pi_n(m)|_1+ 2ds_p
\end{equation*}
and together with an elementary inequality
\begin{equation*}
(t+s)^\alpha\le
\begin{cases}
t^\alpha+s^\alpha,& \alpha\le 1, \\
(1+s)^{\alpha-1}(t^\alpha+s),& \alpha>1,  
\end{cases}
\end{equation*}
where the second one is obtained by applying convexity 
to $(\frac{1\cdot t+s\cdot 1}{1+s})^\alpha$,
we get
\begin{equation*}
\begin{split}
T_n( \omega_1)
&\le \sum_{m=1}^n|\pi_n'(m-1)- \pi_n'(m)|_1^\alpha\\
&\le 
\begin{cases}
\sum_{m=1}^n|\pi_n(m-1)-\pi_n(m)|_1^\alpha+(2ds_p)^\alpha n,&\alpha\le 1,\\
(1+2ds_p)^{\alpha-1}\left(\sum_{m=1}^n|\pi_n(m-1)-
\pi_n(m)|_1^\alpha+2ds_pn\right),&\alpha>1.
\end{cases}
\end{split}
\end{equation*}
Since $s_p$ tends to zero as {$p\uparrow 1$}, we are done.
\end{proof}

Our second main result in this subsection is the lower tail estimate
of the passage time distribution. 
\begin{proposition}
\label{concentration}
There exist positive constants $C_1$, $C_2$ and $\lambda\in (0,1)$ 
such that for any $n\in\N$,
\begin{equation}
Q\left(T_n(\omega_1)-n\mu_1<-n^{1-\lambda}\right)
\le C_1\exp\left\{-C_2
n^\lambda \right\}.
\label{concentration3}
\end{equation}
\end{proposition}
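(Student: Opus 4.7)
The plan is to establish exponential concentration of $T_n(\omega_1)$ around its mean $Q[T_n]$ via a Doob-martingale argument in the spirit of Kesten, and then invoke Fekete's lemma to exchange the mean for $n\mu_1$. Write $\omega_1^{(k)}:=\omega_1\cap(\{k\}\times\R^d)$ and $\mathcal{F}_k:=\sigma(\omega_1^{(1)},\ldots,\omega_1^{(k)})$; since the slices are independent unit-intensity Poisson processes on $\R^d$, the Doob martingale $M_k:=Q[T_n\mid\mathcal{F}_k]$ runs from $M_0=Q[T_n]$ to $M_n=T_n$. If $\tilde\omega^{(k)}$ is an independent copy of $\omega_1^{(k)}$ and $\omega_{1,k}$ denotes the configuration with the $k$th slice replaced, a standard conditioning argument yields the representation $D_k:=M_k-M_{k-1}=Q[T_n(\omega_1)-T_n(\omega_{1,k})\mid\mathcal{F}_k]$.

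To control $D_k$ I would use a local path surgery: starting from the minimizer $\pi^*$ of $T_n(\omega_1)$, reroute it only at time $k$ by replacing $\pi^*(k)$ with its nearest neighbor $\tilde\pi(k)\in\tilde\omega^{(k)}$. The triangle inequality combined with $(a+b)^\alpha\le a^\alpha+b^\alpha$ for $\alpha\le 1$ (or a mean-value bound for $\alpha>1$) controls the excess cost by $C_\alpha R_k^\alpha$ (plus a step-size correction when $\alpha>1$), where $R_k:=\mathrm{dist}(\pi^*(k),\tilde\omega^{(k)})$. The crucial observation is that by translation invariance and the independence of $\tilde\omega^{(k)}$ from $\mathcal{F}_{k-1}$, the conditional distribution of $R_k$ given $\mathcal{F}_{k-1}$ is that of the distance from the origin to an independent unit-intensity Poisson process on $\R^d$, which has stretched-exponential tail $\exp(-cr^d)$; the reverse bound is handled symmetrically. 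Hence $Q[D_k^2\mid\mathcal{F}_{k-1}]$ is uniformly bounded. After a mild truncation of $|D_k|$ at a slowly-growing threshold---the exceptional set being exponentially rare by the Poisson tail estimate---Azuma--Hoeffding delivers $Q(|T_n-Q[T_n]|\ge t)\le C_1\exp(-ct^2/n)$. For $t=n^{1-\lambda}$ with $\lambda\in(0,1/3]$ the exponent is $n^{1-2\lambda}\ge n^\lambda$; combining with $Q[T_n]\ge n\mu_1$ (subadditivity plus $\mu_1=\inf_n Q[T_n]/n$), the event $\{T_n-n\mu_1<-n^{1-\lambda}\}$ is contained in $\{T_n-Q[T_n]<-n^{1-\lambda}\}$, and the proposition follows.

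The main obstacle lies in the case $\alpha>1$, in which the mean-value estimate produces a correction $R_k\cdot S_k^{\alpha-1}$ involving an adjacent step $S_k=|\pi^*(k-1)-\pi^*(k)|$ of the optimal path, which is not deterministically bounded. The remedy is to restrict the argument to the high-probability event $\{\max_k S_k\le L_n\}$ for a suitable polynomial threshold $L_n$---using the a priori bound $S_k^\alpha\le T_n\le Cn$ together with typical-step estimates for the Poisson geometry---so that on the good event $|D_k|$ grows only as a small power of $n$; the complementary event is exponentially rare, and the martingale inequality still yields the required $\exp(-C_2n^\lambda)$ for $\lambda$ chosen small enough.
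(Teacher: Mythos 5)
Your overall plan — a Doob‑martingale/self‑bounding argument for concentration of $T_n$ around $Q[T_n]$, followed by $Q[T_n]\ge n\mu_1$ from $\mu_1=\inf_n n^{-1}Q[T_n]$ — is the same structure the paper uses, and your truncation of $R_k$ is the probabilistic analogue of the paper's deterministic modification $\bar\omega_1$ (which adds a Poisson point to every vacant cube of side $n^\theta$, so that the nearest available point is \emph{always} within distance $dn^\theta$). The final replacement of the mean by $n\mu_1$ is identical.

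However, there is a genuine gap in your treatment of the case $\alpha>1$. As you note, the one-slice increment satisfies $|D_k|\lesssim R_k S_k^{\alpha-1}+R_k^\alpha$ with $S_k$ an adjacent step of the minimizer, and the only available a priori bound is $\sum_k S_k^\alpha=T_n\lesssim n$, hence $\max_k S_k\lesssim n^{1/\alpha}$. Your plan is to truncate $\max_k S_k$ at a threshold $L_n$ and apply Azuma--Hoeffding, which uses the essential-sup bound $\sum_k\|D_k\|_\infty^2\lesssim n\,L_n^{2(\alpha-1)}\,n^{2\theta}$. With the only attainable $L_n\sim n^{1/\alpha}$ this is $\approx n^{1+2(\alpha-1)/\alpha+2\theta}$, which already equals or exceeds $n^2$ once $\alpha\ge 2$, so the resulting Azuma bound $\exp\{-t^2/(nL_n^{2(\alpha-1)}n^{2\theta})\}$ is vacuous there, and the proposition must hold for all $\alpha>0$ (it is used in Theorem~\ref{high-density} for general $\alpha$). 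The paper's proof avoids this by not using a \emph{uniform} bound at all: it exploits the a priori cost bound to count large jumps, $\#\{m: S_m\ge n^{k\theta}\}\lesssim n^{1-(k-1)\alpha\theta}$, and groups the increments by jump size to show that the \emph{realized} quantity $\sum_m\bigl(\sup_{\omega'}T_n(\bar\omega_1^{(m)})-T_n(\bar\omega_1)\bigr)^2\lesssim n^\rho$ with $\rho<2$, which is exactly the input of the entropy-method inequality (Theorem~6.7 of Boucheron--Lugosi--Massart). So the key missing step in your outline is this jump-size counting, which converts the potentially large individual increments into a subquadratic sum of squares; a plain Azuma bound on $\max_k|D_k|$ cannot see this and does not close for $\alpha\ge 2$. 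If you want to keep the martingale language, you would need a Freedman/McDiarmid-type inequality that takes a bound on $\sum_k D_k^2$ (not on $\max_k\|D_k\|_\infty$) as input, together with the paper's counting of large jumps.
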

\begin{proof}
We fix a small $\theta>0$ and define 
 \begin{equation*}
 \bar{\omega}=\omega+\sum_{(k,x)\in \N\times n^\theta\Z^d}
 1_{\{\omega(\{k\}\times (x+[0,n^\theta)^d))=0\}}\delta_{(k,x)},
 \end{equation*} 
that is, 
when we find a large vacant box, we add an $\omega$-point 
artificially at a corner.
This modification provides a uniform bound for the passage time
\begin{equation*}
 \sup_{\omega}T_n( \bar\omega)\le 
d^\alpha n^{1+\alpha\theta}
\label{trivial-bd}
\end{equation*}
since there is a path whose all jumps are bounded by $dn^\theta$. 
We also have the following upper 
tail estimate. 
\begin{lemma}
\label{greedy}
There exists $C_0>0$ such that for all $n\in\N$ and $m>C_0n$,
\begin{equation}
 Q(T_n(\omega_1)>m)\le \exp\{-m^{1\wedge {d \over \alpha}}/C_0\}. 
\label{upper-tail}
\end{equation} 
\end{lemma}
\begin{proof}
Note that $T_n(\omega_1)$ is bounded by the passage 
time of the {\em greedy} path $\{(k,x_k)\}_{k\in\N}$ which is inductively
constructed by minimizing the distance to points in the next section,
that is, $x_0=0$ and
\begin{equation*}
 x_k={\rm argmin}\{|x_{k-1}-x|_1: (k, x)\in\omega_1\}.
\end{equation*} 
The passage time of such a path is nothing but the sum of 
independent random variables with the same distribution as 
${\rm dist}((0,0), \omega_1|_{\{0\}\times\R^d})^\alpha$.
One can bound its tail as
\begin{eqnarray*}
Q({\rm dist}((0,0),
\omega_1|_{\{0\}\times\R^d})^\alpha\ge r)
&=&Q\big(\omega_1|_{\{0\}\times\R^d}(B_{l^1}(0,r^{1/\alpha}))=0)\big)\\ 
&=& \exp\set{-cr^{d/\alpha}}
\end{eqnarray*}
for some $c>0$. 
Our assertion follows from this and a well known result 
for the large deviation of sums of independent random variables,
for which we refer to~\cite{Nag79}. 
\end{proof}
Next, we show that $T_n(\omega_1)$ and $T_n(\bar\omega_1)$ 
are essentially the same. 
\begin{lemma}
\label{approx}
There exists $C_3>0$ such that for sufficiently large $n\in\N$, 
\begin{equation*}
\begin{split}
&\max\{Q(T_n( \omega_1)\neq T_n( \bar\omega_1)),
 Q[|T_n( \omega_1)-T_n( \bar\omega_1)|]\}\\
&\qquad \qquad \qquad \qquad \le \exp\{-C_3n^{d\theta}\}.
\end{split}
\end{equation*}
\end{lemma}
\begin{proof}
Thanks to Lemma~\ref{greedy}, we know that 
$T_n(\omega_1)\le C_0n$ with probability greater than
$1-\exp\{-n^{1\wedge \frac{d}{\alpha}}/C_0\}$. 
Under this condition, all the minimizing 
paths for 
$T_n(\omega_1)$ stay inside 
$\mathcal{C}_n:=[0,n]\times [-C_0^{1/\alpha}n^{1+1/\alpha},
C_0^{1/\alpha}n^{1+1/\alpha}]^d$. 
Indeed, if any minimizing path exits $\mathcal{C}_n$, then
it must make a jump larger than $C_0^{1/\alpha}n^{1/\alpha}$ and hence
its passage time is larger than $C_0n$. 
Since $T_n(\bar\omega_1)\le T_n(\omega_1)$, the same applies
to {minimizing} paths for $T_n(\bar\omega_1)$.
This space-time region contains only polynomially many 
boxes of the form $\{k\}\times (x+[0,n^\theta)^d)$ and each of them
is vacant with probability $\exp\{-cn^{d\theta}\}$.
Thus it follows that 
\begin{equation*}
 Q(\omega_1=\bar\omega_1\textrm{ in }\mathcal{C}_n)
\ge 1-\exp\{-cn^{d\theta}/2\}
\end{equation*}
for large $n$. 
Since $T_n( \omega_1)= T_n( \bar\omega_1)$
on the event 
\begin{equation*}
 \{T_n(\omega_1)\le C_0n \textrm{ and }
\omega_1=\bar\omega_1\textrm{ in }\mathcal{C}_n\},
\end{equation*}
we get the desired bound on $Q(T_n(\omega_1)\neq T_n(\bar\omega_1))$. 

As for the $L^1(Q)$ distance, we use the Schwarz inequality to obtain
\begin{equation*}
\begin{split}
&Q[|T_n( \omega_1)-T_n( \bar\omega_1)|]\\
&\quad \le Q\left[(T_n( \omega_1)-T_n( \bar\omega_1))^2\right]^{1/2}
Q(T_n( \omega_1)\neq T_n( \bar\omega_1))^{1/2}.
\end{split}
\end{equation*}
The first factor on the right-hand side 
is of $O(n)$ as $n\to \infty$ due to Lemma~\ref{greedy}.
\end{proof}
We proceed to a lower tail estimate for $T_n(\bar\omega_1)$. 
Let $\bar\omega_1^{(m)}$ be the point process obtained by 
replacing its $\{m\}\times\R^d$-section by 
$\bar\omega'$ which is the modification of 
another configuration $\omega'$. 
We are going to use the so-called entropy method
(Theorem~6.7 in~\cite{BLM13}) and it 
requires a bound on
\begin{equation}
\sum_{m=1}^n\left(\sup_{\omega'}T_n(\bar\omega_1^{(m)})
-T_n(\bar\omega_1)\right)^2.
\label{ES}
\end{equation}
Let us first assume $\alpha\ge 1$ 
and let $\{\pi_n(m)\}_{m=0}^n$ be 
a minimizing path for $T_n(\bar\omega_1)$.
As we can find a point in $\omega'|_{\{m\}\times\R^d}$ within the 
distance $dn^\theta$ to $\pi_n(m)$, 
\begin{equation*}
\begin{split}
&\sup_{\omega'}T_n(\bar\omega_1^{(m)})
 -T_n(\bar\omega_1)\\
 &\quad\le
{\alpha}(|\pi_n(m-1)-\pi_n(m)|_1+dn^\theta)^{\alpha-1} dn^\theta
{1_{\{ m \ge 1\}} }\\
 &\qquad+{\alpha}(|\pi_n(m)-\pi_n(m+1)|_1+dn^\theta)^{\alpha-1} dn^\theta
{1_{\{ m \le n-1\}}}.
\end{split}
\end{equation*}
Furthermore, the {\em a priori} bound
\begin{equation*}
T_n(\bar\omega_1)=\sum_{m=1}^n|\pi_n(m-1)-\pi_n(m)|_1^\alpha
\le d^\alpha n^{1+\alpha\theta} 
\end{equation*}
yields the following bound on the numbers of large jumps
\begin{equation*}
 \#\{m\le n\colon |\pi_n(m-1)-\pi_n(m)|_1\ge n^{k\theta}\}
\le {d^\alpha}n^{1-(k-1)\alpha\theta}
1_{\{k \le \frac{1}{\alpha\theta}+2\}}.
\end{equation*}
Thus by dividing the sum in~\eqref{ES} according to the indices with
jump size falling in $[n^{k\theta}, n^{(k+1)\theta})$, 
we can bound it, up to a multiplicative constant, by
\begin{equation*}
\begin{split}
\sum_{k\le \frac{1}{\alpha\theta}+2}
n^{1-(k-1)\alpha\theta}n^{2(k+1)\theta(\alpha-1)+2\theta}
{=n^{1+3\alpha\theta}}\sum_{k\le \frac{1}{\alpha\theta}+2}
n^{(\alpha-2)\theta k}.
\end{split}
\end{equation*}
It is simple to check that the right-hand side is bounded
by $n^{\rho}$ with $\rho<2$ when $\theta$ is sufficiently small.
Then, Theorem~6.7 in~\cite{BLM13} yields
\begin{equation*}
Q\left( T_n(\bar\omega_1)-Q[T_n(\bar\omega_1)]<-n^{1-\lambda}
\right)
\le \exp\{-C_2 n^{2-\rho-2\lambda}\}.
\end{equation*}
Lemma~\ref{approx} shows that this remains valid with $\bar\omega_1$
replaced by $\omega_1$ and $\exp\{-C_3n^{d\theta}\}$ added to the 
right-hand side. Finally, since 
$\mu_1=\inf_nn^{-1}Q[T_n(\omega_1)]$, we can further replace
$Q[T_n(\omega_1)]$ by $n\mu_1$ and arrive at
\begin{equation*}
Q( T_n(\omega_1)-n\mu_1<-n^{1-\lambda})
\le \exp\{-C_2n^{2-\rho-2\lambda}\}+\exp\{-C_3n^{d\theta}\}.
\end{equation*}
Choosing $\lambda>0$ small, we get the desired bound. 

The case $\alpha<1$ is simpler since we readily get
$\sup_{\omega'}T_n(\bar\omega_1^{(m)})-T_n(\bar\omega_1)
\le 2d^\alpha n^{\alpha\theta}$ uniformly in $m$ 
just as in~\eqref{alpha<1}.
\end{proof}

\section{Proof of Theorem~\ref{high-density}}
\label{HD}
In this section, we continue to assume that $\eta$ is realized
as in~\eqref{coupling} in the previous section. 
Recall also that we defined $s_p=(\log\frac{1}{p})^{1/d}$, which is
asymptotic to $(1-p)^{1/d}$ as {$p\uparrow 1$.}
{
The positivity of $\mu_1$ can proved by essentially the same 
argument as in the upper bound: 
see Remark~\ref{positivity} below. 
Let us first complete the proof of~\eqref{eq:HD} assuming it. 
}
\begin{proof}
[Lower bound]
Let $\pi_n$ be a 
minimizing path for $T_n(\omega_p)$. Then obviously,
\begin{equation*}
\begin{split}
Z_n^{\eta,-\infty}&=P(H_n^{\eta}=0)\\
&\ge P(X_k=\pi_n(k)\textrm{ for all }1\le k\le n)\\
&= c_1^n\exp\set{-c_2s_p^{-\alpha}T_n(\omega_p)}
\end{split}
\end{equation*}
and hence
\begin{equation*}
\varphi(p,-\infty)\ge -c_2s_p^{-\alpha}\mu_p+\log c_1.
\end{equation*}
By letting $p\uparrow 1$ and using Theorem~\ref{continuity},
we get the desired lower bound.
\end{proof}
The upper bound is more laborious since we have to show that
the number of paths makes negligible contribution. 
We closely follow the argument of Mountford in~\cite{Mou01}. 
\begin{proof}
[Upper bound]
Let 
{$M=(\alpha+2)/\alpha$ and} 
define a {face-to-face} passage time
\begin{equation*}
\Phi_R(\omega_p)
=\inf\Biggl\{\sum_{i=1}^R |x_{i\!-\!1}\!-\!x_i|_1^\alpha: 
{|x_0|_{\infty}\le} R^M\textrm{ and }\\   
(i, x_i)\!\in \!\omega_p
 \textrm{ for }1\!\le \! i\!\le\! R\Biggr\}
\end{equation*}
for $R\in\N$. We fix $\epsilon>0$ and say that 
$(k, x)\in \N\times 2\Z^d$ is $\epsilon$-{\em good} if the 
following two conditions hold:
\begin{enumerate}
\item $\Phi_R(\omega_p-(k,R^Mx)) \ge (\mu_1-\epsilon)R$;
\item $\max_{k+1 \le l \le k+R} 
 \omega_p(\{l\}\times (R^Mx+[-2R^M, 2R^M]^d))\le 4^{d+1}R^{dM}$,
\end{enumerate}
{where $\omega_p-(k,R^Mx)$ is the translation of $\omega_p$ 
regarded as a set.
}
Our basic strategy is to show that: (1) if the polymer,
scaled by a factor of $s_p R^{-M}$, comes close 
to an $\epsilon$-good point, then it costs at least 
$\exp\{-(\mu_1-\epsilon)R\}$ to survive the next $R$-duration; 
(2) most of the times in $\{jR\}_{j=1}^{[n/R]}$, the polymer
is close to an $\epsilon$-good point with high probability.
\begin{lemma}
\label{good-is-typical} 
There exists $p_0(\epsilon)\in(0,1)$ such that 
\begin{equation*}
\lim_{R\to\infty}Q((k,x)\textrm{ is $\epsilon$-good})= 1
\end{equation*}
uniformly in $p\in[p_0(\epsilon),1]$ and $(k,x)\in\N\times 2\Z^d$.
\end{lemma}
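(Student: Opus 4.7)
My plan is to reduce to the case $(k,x)=(0,0)$ by translation invariance of the underlying Poisson process $\omega_1$, and then to prove that each of the two defining events of $\epsilon$-goodness occurs with probability tending to $1$ as $R\to\infty$, uniformly in $p\in[p_0(\epsilon),1]$. Condition~(ii) is handled by a routine concentration argument: for each fixed $l$, the count $\omega_p(\{l\}\times[-2R^M,2R^M]^d)$ is a sum of at most $(4R^M/s_p)^d$ independent Bernoulli$(1-p)$ variables with mean bounded by $2\cdot 4^d R^{dM}$ for $p$ close enough to $1$, since $s_p^d=\log(1/p)\sim 1-p$. A Chernoff bound yields failure probability $\le e^{-cR^{dM}}$ per slice, and a union bound over the $R$ slices handles~(ii).

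For condition~(i), the crucial deterministic step is the observation that if $\Phi_R(\omega_p)<(\mu_1-\epsilon)R$, then each summand $|x_{i-1}-x_i|_1^\alpha$ of the minimizing sequence is at most $(\mu_1-\epsilon)R$, so in particular $|x_1|_\infty\le R^M+((\mu_1-\epsilon)R)^{1/\alpha}\le 2R^M$ for all large $R$, using $M=(\alpha+2)/\alpha>1/\alpha$. Dropping the first summand of the sum,
\begin{equation*}
\Phi_R(\omega_p)\ge \min_{x_1} T_{R-1}^{x_1}(\omega_p),
\end{equation*}
where the minimum runs over $x_1\in\omega_p\cap(\{1\}\times[-2R^M,2R^M]^d)$ and $T_{R-1}^{x_1}(\omega_p)$ denotes the $(R-1)$-step passage time starting from $x_1$ at time $1$. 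By translation invariance of $\omega_p$ under shifts in $s_p\Z^d$, the law of $T_{R-1}^{x_1}(\omega_p)$ equals that of $T_{R-1}(\omega_p)$ for each fixed $x_1\in s_p\Z^d$, and by condition~(ii) the number of candidate points $x_1$ is at most $4^{d+1}R^{dM}$.

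For each such $x_1$ I would combine the coupling comparison~\eqref{comparison} with the concentration estimate of Proposition~\ref{concentration} (which is stated for $\omega_1$) to deduce
\begin{equation*}
Q\bigl(T_{R-1}^{x_1}(\omega_p)<(\mu_1-\epsilon)R\bigr)\le C_1\exp\{-C_2 R^\lambda\},
\end{equation*}
valid whenever $p$ is close enough to $1$ that the constants $\delta_i=\delta_i(p)$ in~\eqref{comparison} satisfy $\mu_1/(1+\delta_1)-\delta_2\ge \mu_1-\epsilon/2$ (and $R$ is large enough that $R^{-\lambda}\le \epsilon/4$). A union bound over the $O(R^{dM})$ candidates then gives total failure probability $\le R^{dM}\cdot C_1\exp\{-C_2R^\lambda\}\to 0$ as $R\to\infty$. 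The main obstacle is the joint $p$- and $R$-uniformity: $p_0(\epsilon)$ must be chosen once and for all so that simultaneously the $\delta_i$-corrections in~\eqref{comparison} absorb only an $\epsilon/2$-window, the Chernoff estimate for~(ii) is valid, and the concentration constants in Proposition~\ref{concentration} remain usable.
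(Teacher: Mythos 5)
Your outline is structurally close to the paper's argument — both reduce to $(k,x)=(0,0)$, treat condition~(ii) by a per-slice Chernoff bound and union bound, and reduce condition~(i) to Proposition~\ref{concentration} via the coupling comparison~\eqref{comparison} — but you resolve the continuum of possible starting points differently, and that difference introduces a genuine gap that needs to be closed.

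The paper discretizes the starting point $x_0$ itself: since $x_0$ need not lie in $\omega_p$, one rounds it to the nearest $y\in \Z^d\cap[-R^M,R^M]^d$ and shows (using that all jumps of a near-minimizer are $O(R^{1/\alpha})$ on the high-probability event $E_R$, and the mean value theorem for $t\mapsto t^\alpha$) that this rounding changes the passage time only by $O(d^\alpha\vee R^{(\alpha-1)/\alpha})$. This produces a \emph{deterministic} family of $O(R^{dM})$ starting points, so the union bound is immediate and manifestly $p$-uniform. You instead drop the first summand and index by the first $\omega_p$-point $x_1$. The issue is that your candidates $x_1\in\omega_p\cap(\{1\}\times[-2R^M,2R^M]^d)$ are themselves random, so you cannot simply write ``union bound over the $O(R^{dM})$ candidates''; a union bound requires a fixed indexing set. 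The two obvious repairs both need to be addressed explicitly. (a) Enlarging to the deterministic lattice $s_p\Z^d\cap[-2R^M,2R^M]^d$ fails, since this set has $\sim (R^M/s_p)^d$ points and $s_p^{-d}\to\infty$ as $p\uparrow 1$, destroying the required uniformity in $p$. (b) The correct fix is to condition on $\mathcal F_1:=\sigma(\omega_p|_{\{1\}\times\R^d})$: given $\mathcal F_1$ the candidates are fixed, and each $T_{R-1}^{x_1}$ is a function of $\omega_p|_{\{2,\dots,R\}\times\R^d}$, which is independent of $\mathcal F_1$, so the conditional tail bound for each candidate equals the unconditional one; one then multiplies by the (conditional) number of candidates and takes expectations, splitting off the low-probability event where condition~(ii) fails. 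This works, but it is precisely the step your sketch omits, and it is the only genuinely nontrivial point in adapting your route. The paper's choice to discretize $x_0$ rather than $x_1$ is a small but clean trick that avoids the conditioning altogether and makes the $p$-uniformity of the union bound transparent; if you keep your $x_1$-based route, spell out the conditioning-and-independence argument.

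(A minor remark: you rely, as does the paper, on the inequality~\eqref{comparison} for $\omega_p-(0,y)$ versus $\omega_1-(0,y)$, i.e.\ a version with shifted origin, and on the constants $\delta_1,\delta_2$ being uniform over starting points; this follows from stationarity but should be stated. Also, as written the paper's displayed inequality~\eqref{diff} has the sign reversed — since $\Phi_R$ allows more starting points it satisfies $\Phi_R\le\min_y T_R(\omega_p-(0,y))$, and what is actually used is that the discretization increases the passage time by at most $d^\alpha\vee (cR^{(\alpha-1)/\alpha})$ — but that is a typo in the source, not a gap in your argument.)
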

\begin{proof}
By translation invariance, we may assume $(k,x)=(0,0)$ without loss
of generality. 
Note also that the probability of
\begin{equation*}
\begin{split}
E_R=\left\{\omega_1\colon\max_{y\in [-R^M,R^M]^d\cap\Z^d}
 T_R( \omega_1-(0,y))\le C_0R \right\}
\end{split}
\end{equation*}
tends to one as $R\to\infty$ by Lemma~\ref{greedy}. 
On this event, we know from \eqref{comparison} that 
\begin{equation}
T_R( \omega_p-(0,y))<T_R( \omega_1-(0,y))+\epsilon R\le (C_0+\epsilon)R
\label{comparison2}
\end{equation} 
for all $p$ close to one. 
As a consequence, all the minimizing paths for 
$T_R( \omega_p-(0,y))$, that is, the passage time 
from $(0,y)$ to $\{R\}\times \R^d$, make jumps of size at most 
a constant multiple of $R^{1/\alpha}$. 
Then by using the mean value theorem, one can check that
\begin{equation}
\Phi_R(\omega_p)-
\min_{y\in [-R^M,R^M]^d\cap\Z^d}T_R( \omega_p-(0,y))\ge d^\alpha
\vee (cR^{(\alpha-1)/\alpha})
\label{diff}
\end{equation}
for some $c>0$, since the difference comes only from the starting points. 

Thus we can bound  
\begin{equation*}
\begin{split}
&Q(\{\Phi_R(\omega_p) \le (\mu_1-2\epsilon)R\}\cap E_R)\\
&\quad \le 
Q\left(\min_{y\in [-R^M,R^M]^d\cap\Z^d}
T_R( \omega_1-(0,y)){+}d^\alpha \vee (cR^{(\alpha-1)/\alpha})
\le (\mu_1-{2}\epsilon)R\right)\\
&\quad\le \sum_{y\in [-R^M,R^M]^d\cap\Z^d}
Q\big(T_R(\omega_1-(0,y))
\le (\mu_1-\epsilon)R\big)\\
&\quad \le (2R^M+1)^d
 C_1\exp\left\{-C_2R^\lambda \right\}
\end{split}
\end{equation*}
for sufficiently large $R$, where we have used~{\eqref{diff}} in
the first inequality. 

On the other hand, a simple large deviation estimate shows that
there is $c>0$ such that for any $l\in\N$, 
\begin{equation*}
Q\left(\omega_p(\{l\}\times [-2R^M, 2R^M]^d)> 4^{d+1}R^{dM}\right)
\le \exp\set{-cR^{dM}}
\end{equation*}
and summing over $l\in \{1,2,\ldots,R\}$, we get
\begin{equation*}
 Q\left(\max_{1 \le l \le R} 
 \omega_p(\{l\}\times [-2R^M, 2R^M]^d)> 4^{d+1}R^{dM}\right)\to 0
\end{equation*} 
as $R\to\infty$.
\end{proof}
Let us write $C_p(x)=s_p^{-1}(R^Mx+[-R^M,R^M]^d)$ for shorthand. 
\begin{lemma}
\label{good-is-good}
For sufficiently large $R\in\N$, there exists 
$p_1(R,\epsilon)>0$
such that if $p\in [{p_1(R,\epsilon)},1)$ and 
$(k,x)$ is $\epsilon$-good, then
\begin{equation*}
 \begin{split}
 &\sup_{y\in C_p(x)}
 P(\eta(l,X_l)=0 \textrm{ for all }l\in\{k+1,\ldots, k+R\}
|X_k=y) \qquad\qquad\\
 &\qquad \qquad\qquad\qquad \le \exp\set{-c_2s_p^{-\alpha}(\mu_1-2\epsilon)R}.
 \end{split}
\end{equation*}
\end{lemma}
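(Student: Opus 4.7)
The plan is to bound the conditional probability directly. Writing $x_k = y$, it equals
$$\sum_{x_{k+1}, \ldots, x_{k+R}} \prod_{l=k+1}^{k+R} f(|x_{l-1} - x_l|_1),$$
summed over sequences with $\eta(l, x_l) = 0$ for every $l$ in the interval. I split this sum into contributions from \emph{interior} paths, those satisfying $s_p x_l \in R^M x + [-2R^M, 2R^M]^d$ for all $l \in \{k+1, \ldots, k+R\}$, and \emph{exit} paths, for which this inclusion fails at some $l$.

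For interior paths, I apply the affine change of variables $x_i' := s_p x_{k+i} - R^M x$, which maps each interior path to an admissible configuration for $\Phi_R(\omega_p - (k, R^M x))$ whose starting point $x_0' = s_p y - R^M x$ lies in $[-R^M, R^M]^d$ because $y \in C_p(x)$. Condition~(i) of $\epsilon$-goodness then yields
$$\sum_{l=k+1}^{k+R} |x_l - x_{l-1}|_1^\alpha \;=\; s_p^{-\alpha} \sum_{i=1}^R |x_i' - x_{i-1}'|_1^\alpha \;\ge\; s_p^{-\alpha}(\mu_1 - \epsilon)R,$$
so every interior path has weight at most $c_1^R \exp\{-c_2 s_p^{-\alpha}(\mu_1 - \epsilon)R\}$. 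Condition~(ii) bounds the number of $\omega_p$-points in each time slice of the enlarged box by $4^{d+1} R^{dM}$, so the number of interior paths is at most $(4^{d+1} R^{dM})^R$. Multiplying these two bounds,
$$\exp\big\{R \log c_1 + R \log(4^{d+1} R^{dM}) - c_2 s_p^{-\alpha}(\mu_1 - \epsilon)R\big\},$$
which is at most $\exp\{-c_2 s_p^{-\alpha}(\mu_1 - 2\epsilon)R\}$ as soon as $p$ is so close to $1$ that $c_2 s_p^{-\alpha} \epsilon$ exceeds $\log c_1 + \log(4^{d+1} R^{dM})$. This is what fixes the threshold $p_1(R,\epsilon)$.

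For exit paths the key is that exiting forces a long jump. Since $y \in C_p(x)$ lies in the half-size box, an exit at step $l$ gives $|X_l - y|_\infty \ge s_p^{-1} R^M$, whence by the triangle inequality at least one of the $R$ jumps satisfies $|X_j - X_{j-1}|_1 \ge s_p^{-1} R^{M-1}$. The explicit law $f(k) = c_1 e^{-c_2 k^\alpha}$ combined with the polynomial volume of $\ell_1$-spheres gives the tail $P(|X_1 - X_0|_1 \ge J) \le C e^{-c_2 J^\alpha / 2}$ for large $J$, so a union bound over the $R$ time steps yields
$$P(\text{exit}\mid X_k = y) \;\le\; R C \exp\{-c_2 s_p^{-\alpha} R^{(M-1)\alpha}/2\} \;=\; R C \exp\{-c_2 s_p^{-\alpha} R^2/2\},$$
using $M = (\alpha+2)/\alpha$ so that $(M-1)\alpha = 2$. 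For $R$ large (depending only on $\mu_1$ and $\epsilon$) this is dominated by the target $\exp\{-c_2 s_p^{-\alpha}(\mu_1 - 2\epsilon)R\}$, and both bounds are uniform in $y \in C_p(x)$, so the supremum is handled automatically.

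The main obstacle is organising the two scaling requirements consistently: the interior estimate needs $p$ very close to $1$ \emph{for fixed $R$} (so that $s_p^{-\alpha}$ dominates the entropic factor $R \log R$ coming from the density of $\omega_p$-points), while the exit estimate needs $R$ large \emph{independently of $p$} (so that $R^2 \gg R$). The identity $(M-1)\alpha = 2$, built into the choice $M = (\alpha+2)/\alpha$ declared at the start of the proof, is precisely what makes the exit bound strong enough to be harmless; the rest is then the correct order of quantifiers — first pick $R$ large, then choose $p_1(R,\epsilon)$ close to $1$.
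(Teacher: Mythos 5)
Your proof is correct and mirrors the paper's overall structure: both separate paths that stay in the enlarged box from those that leave it, use condition~(i) to bound each surviving interior path's weight by $c_1^R\exp\{-c_2s_p^{-\alpha}(\mu_1-\epsilon)R\}$, use condition~(ii) to cap the interior path count by $(4^{d+1}R^{dM})^R$, and then absorb the entropic prefactor into the exponential by taking $p$ close to $1$ so that $s_p^{-\alpha}$ dominates $\log R$. The one place you genuinely diverge is the exit estimate. The paper bounds $\sup_{y\in C_p(x)}P_y(\max_{l\le R}|X_l|_\infty\ge 2s_p^{-1}R^M)$ by an exponential-moment argument on $\sum_j|X_{j-1}-X_j|_1^\alpha$, treating the cases $\alpha\le 1$ (subadditivity of $t\mapsto t^\alpha$) and $\alpha>1$ (Jensen) separately, and obtains rates $\exp\{-cs_p^{-\alpha}R^{\alpha+2}\}$ and $\exp\{-cs_p^{-\alpha}R^3\}$ respectively. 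You instead observe that leaving the box forces at least one jump of size $\ge s_p^{-1}R^{M-1}$ and take a union bound over the $R$ steps, giving the uniform rate $R\cdot C\exp\{-c_2s_p^{-\alpha}R^2/2\}$ valid for all $\alpha>0$ with no case distinction. This is slightly weaker in exponent than the paper's, but since the identity $(M-1)\alpha=2$ still makes the rate $R^2\gg R$, it is amply sufficient and arguably cleaner. One cosmetic remark: strictly speaking the exit bound dominating the target still uses that $s_p^{-\alpha}$ is bounded away from zero, but since you ultimately restrict $p\in[p_1(R,\epsilon),1)$ after fixing $R$, your quantifier ordering handles this.
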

\begin{proof}
We again assume that $(k,x)=(0,0)$ without loss of generality. 
We first prove 
\begin{equation}
\sup_{y\in C_p(0)}
{P_y\left(\max_{1\le l\le R}|X_l|_\infty\ge
 2s_p^{-1}R^M\right)}
\le \exp\set{- C_5s_p^{-\alpha}
R^2}
\label{RW-tail}
\end{equation}
so that we may assume the contrary. 
When $\alpha\le 1$, one can readily check that 
\begin{equation*}
\begin{split}
\sup_{y\in C_p(0)}
{P_y\left(\max_{1\le l\le R}|X_l|_\infty\ge 2s_p^{-1}R^M\right)}\!\!\!\!\!\!
&\quad \le
P\left(\max_{1\le l\le R}|X_l|_\infty\ge s_p^{-1}R^M\right)\\
&\quad\le P\left(\sum_{j=1}^R
|X_{j-1}-X_{j}|_1^\alpha\ge 
s_p^{-\alpha}R^{\alpha+2}\right).
\end{split}
\end{equation*}
Since our assumption on the transition probability implies 
\begin{equation}
\label{alpha-Laplace}
C_6:=P\left[\exp\set{\frac{c_2}{2}|X_1|_1^\alpha}\right]\in (1,\infty),
\end{equation}
Chebyshev's inequality yields
\begin{equation*}
\textrm{LHS of \eqref{RW-tail}}\le 
\exp\set{-\frac{c_2}{2}s_p^{-\alpha}R^{\alpha+2}
 +R\log C_6}.
\end{equation*}
For $\alpha>1$, we use Jensen's inequality to get
\begin{equation*}
\sup_{y\in C_p(0)}
{P_y\left(\max_{1\le l\le R}|X_l|_\infty\ge 2s_p^{-1}R^M\right)}
\le P\left(R^{\alpha-1}\sum_{j=1}^R
|X_{j-1}\!-\!X_{j}|_1^\alpha\ge 
s_p^{-\alpha}R^{\alpha+2}\right).
\end{equation*} 
With the help of~\eqref{alpha-Laplace}, the rest of the proof 
is similar to the above. 
 
Thanks to the condition~(i), every path satisfying 
$H^\eta_R(X)=0$ has probability at most 
\begin{equation*}
c_1^R\exp\set{-c_2s_p^{-\alpha}(\mu_1-\epsilon)R}
\end{equation*}
under $P(\cdot|X_0=y)$. On the other hand, condition~(ii) ensures that 
there are at most $(4^{d+1}R^{dM})^R$ such paths which, in addition,
stay inside $[0,R]\times s_p^{-1}[-2R^M, 2R^M]$. Therefore we have
\begin{equation*}
 {P_y\bigl(H_R^\eta=0, \max_{1\le l\le R}|X_l|< 2s_p^{-1}R^M\bigr)}
  \le (c_14^{d+1}R^{dM})^R
\exp\set{-c_2s_p^{-\alpha}(\mu_1\!-\!\epsilon)R}
\end{equation*}
{and since $s_p$ tends to zero as $p\uparrow 1$,} the assertion follows. 
\end{proof}
Let 
$\psi_{\epsilon}(k,x)=c_2(\mu_1-2\epsilon)
1_{\{({kR}, x)
\textrm{ is $\epsilon$-good}\}}$ and 
\begin{equation*}
\Gamma=\set{\gamma=(j,\gamma_j)_{j\in\Z_+}: 
\gamma_0=0, \gamma_j\in 2\Z^d}.
\end{equation*}
For $\gamma\in\Gamma$ and an integer $v\ge 1$, we define
\begin{equation*}
 J_v(\gamma)=\sum_{j=0}^{v-1}
\max\set{\psi_\epsilon(j,\gamma_j),  
C_5 R
(|\gamma_j-\gamma_{j+1}|_\infty-1)_{+}^\alpha}.
\end{equation*}
\begin{lemma}
Let $R$ and $p$ be as in
Lemma~{\ref{good-is-typical} and}~\ref{good-is-good}. 
Then for any $v\ge 1$ and $\gamma\in\Gamma$, 
 \begin{equation*}
 \begin{split}
 &P\left(H_{vR}^\eta=0\textrm{ and }
 X_{jR}\in C_p(\gamma_j)
 \textrm{ for }j=1,\ldots,v\right)\\
 &\qquad \qquad \quad\le
 \exp\set{-s_p^{-\alpha}J_v(\gamma)R}.
 \end{split}
 \end{equation*}
\end{lemma}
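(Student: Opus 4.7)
My plan is to argue by induction on $v$, using the Markov property at time $(v-1)R$ to peel off the final macroscopic block. Writing
\begin{equation*}
E_{v-1} = \{H_{(v-1)R}^\eta = 0\} \cap \bigcap_{j=1}^{v-1}\{X_{jR} \in C_p(\gamma_j)\}
\end{equation*}
and letting $\widetilde{F}_v = \{\eta((v-1)R+l,Y_l)=0,\ 1\le l\le R\}\cap\{Y_R\in C_p(\gamma_v)\}$ denote the time-shifted last-block event viewed under $P_y$, the Markov property at $(v-1)R$ gives
\begin{equation*}
P\bigl(H_{vR}^\eta=0,\ X_{jR}\in C_p(\gamma_j),\ 1\le j\le v\bigr)\le P(E_{v-1})\cdot \sup_{y\in C_p(\gamma_{v-1})}P_y(\widetilde{F}_v).
\end{equation*}
By the induction hypothesis applied to $P(E_{v-1})$, it then suffices to prove the single-block bound
\begin{equation*}
\sup_{y\in C_p(\gamma_{v-1})}P_y(\widetilde{F}_v)\le \exp\bigl\{-s_p^{-\alpha}R\,M_{v-1}\bigr\},\quad M_{v-1}:=\max\bigl\{\psi_\epsilon(v{-}1,\gamma_{v-1}),\,C_5R(|\gamma_{v-1}-\gamma_v|_\infty-1)_+^\alpha\bigr\}.
\end{equation*}

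To produce the $\max$ in the exponent I will prove two separate upper bounds on $\sup_y P_y(\widetilde{F}_v)$ and combine them via the identity $\min(e^{-a},e^{-b})=e^{-\max(a,b)}$. The \emph{good-point bound}, contributing the $\psi_\epsilon$ term, is trivial when $((v{-}1)R,\gamma_{v-1})$ fails to be $\epsilon$-good, and in the $\epsilon$-good case is a direct application of Lemma~\ref{good-is-good} at $(k,x)=((v-1)R,\gamma_{v-1})$---the statement of that lemma is already formulated for arbitrary $(k,x)$, so no extra work is needed beyond invoking translation invariance of the walk in time. The \emph{large-jump bound} uses the geometry: when $|\gamma_{v-1}-\gamma_v|_\infty$ is substantial, every $z\in C_p(\gamma_v)$ lies at $\ell^\infty$-distance at least of order $(|\gamma_{v-1}-\gamma_v|_\infty-1)\,s_p^{-1}R^M$ from any $y\in C_p(\gamma_{v-1})$, so the event $\{Y_R\in C_p(\gamma_v)\}$ forces $\max_{1\le l\le R}|Y_l-y|_\infty$ to be at least that large. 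The Chebyshev-exponential estimate behind \eqref{RW-tail}, using the moment bound \eqref{alpha-Laplace} directly for $\alpha\le 1$ and combined with Jensen's inequality for $\alpha>1$, then delivers the required decay.

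The only delicate point is matching the power of $R$ in the large-jump estimate. With $M=(\alpha+2)/\alpha$, the natural exponent coming out of the Chebyshev bound is of order $s_p^{-\alpha}R^{\alpha+2}(|\gamma_{v-1}-\gamma_v|_\infty-1)^\alpha$ when $\alpha\le 1$, and $s_p^{-\alpha}R^{3}(|\gamma_{v-1}-\gamma_v|_\infty-1)^\alpha$ when $\alpha>1$; both are comfortably larger than the target $s_p^{-\alpha}R^{2}(|\gamma_{v-1}-\gamma_v|_\infty-1)^\alpha$ embedded in $J_v$. The surplus factor (at least $R^{\alpha\wedge 1}$) absorbs both the additive correction $R\log C_6$---using that $s_p^{-\alpha}\to\infty$ as $p\uparrow 1$---and the calibration to a constant $C_5$ independent of $R$ and of the increment $|\gamma_{v-1}-\gamma_v|_\infty$. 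Once both single-block bounds are in hand, taking the minimum yields the $\max$ appearing in $J_v$, and iterating the Markov reduction across the $v$ blocks from the trivial base case $v=0$ (where $J_0=0$) produces the desired estimate $\exp\{-s_p^{-\alpha}J_v(\gamma)R\}$.
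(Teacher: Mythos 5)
Your argument is essentially identical to the paper's: both apply the Markov property at the block endpoints $R, 2R, \ldots, (v-1)R$ to reduce to a single-block bound, and both obtain the $\max$ in $J_v$ by intersecting two events --- the environment event controlled by Lemma~\ref{good-is-good} (giving the $\psi_\epsilon$ term) and the positional event controlled by the exponential moment bound \eqref{alpha-Laplace} as in \eqref{RW-tail} (giving the $C_5 R(\cdot)^\alpha$ term) --- the only cosmetic difference being that you phrase the product as an induction on $v$. One small caution you inherit from the paper: since the boxes $C_p(\gamma_j)$, $\gamma_j\in 2\Z^d$, tile $\R^d$ with abutting faces, the worst-case $\ell^\infty$-separation between $C_p(\gamma_{v-1})$ and $C_p(\gamma_v)$ is of order $(|\gamma_{v-1}-\gamma_v|_\infty - 2)_+\, s_p^{-1}R^M$ rather than $(|\gamma_{v-1}-\gamma_v|_\infty - 1)\,s_p^{-1}R^M$ as you write (for $|\gamma_{v-1}-\gamma_v|_\infty=2$ the boxes touch and the separation is zero); this makes no difference to the ensuing entropy bound but the exponent in $J_v$ should really carry $(\cdot-2)_+$.
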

\begin{proof}
We use Markov property at times $R,2R, \ldots, {(v-1)R}$ to bound 
the left-hand side by
\begin{equation*}
\begin{split}
{\prod_{j=0}^{v-1}}\sup_{y\in C_p(\gamma_j)}
{P_y\left(H_R^{\theta_{jR}\eta}=0 \textrm{ and }
X_R\in C_p(\gamma_{j+1})\right)},
\end{split}
\end{equation*}
where $\theta_k$ ($k\in\N$) is the time-shift operator acting
on the space of environments. 
By Lemma~\ref{good-is-good}, it immediately follows that
\begin{equation*}
\sup_{y\in C_p(\gamma_j)}
{P_y\left(H_R^{\theta_{jR}\eta}=0\right)}
\le \exp\{-s_p^{-\alpha}\psi_\epsilon(j,\gamma_j)R\}
\end{equation*}
for sufficiently large $R$. 
On the other hand, one can show
\begin{equation*}
\sup_{y\in C_p(\gamma_j)}
{P_y\left(X_R\in C_p(\gamma_{j+1})\right)}\\
\le \exp\set{-C_5s_p^{-\alpha} R^2
 (|\gamma_j-\gamma_{j+1}|_\infty-1)_{+}^\alpha}
\end{equation*}
for large $R$ in the same way as that for \eqref{RW-tail}.
\end{proof}
This lemma gives a control only for a fixed $\gamma$ but 
we can indeed reduce the problem to a single $\gamma$ as follows:
We have for any $\epsilon \in (0,1)$ that
\begin{equation*}
 J_v(\gamma) \ge 
(1-\epsilon) J_v(\gamma)
+ \epsilon  C_5 R
\sum_{j=0}^{v-1}(|\gamma_j-\gamma_{j+1}|_\infty-1)_{+}^\alpha.
\end{equation*}
When $p$ is so close to $1$ that 
$s_p^{-\alpha}\epsilon C_5 R^2 \ge 1$, 
for some $c>0$ depending only on $d$ and $\alpha$, 
\begin{equation*}
\begin{split}
 \sum_{\gamma\in\Gamma} 
 \exp \set{ 
 - s_p^{-\alpha}\epsilon C_5 R^2
 \sum_{j=0}^{v-1}(|\gamma_j\!-\!\gamma_{j+1}|_\infty\!-\!1)_{+}^\alpha} 
 & \le  
 \sum_{\gamma\in\Gamma} 
 \exp \set{ -\sum_{j=0}^{v-1}(|\gamma_j\!-\!\gamma_{j+1}|_\infty\!-\!1)_{+}^\alpha}\\
 &\le \exp \{cv\}.
\end{split}
\end{equation*}
Thus it follows that
\begin{equation*}
\sum_{\gamma\in\Gamma} \exp\set{-s_p^{-\alpha}J_v(\gamma)R}\le   
 \exp \set{
 -(1-\epsilon) s_p^{-\alpha}\inf_{ \gamma \in \Gamma}J_v(\gamma)R+cv}.
\end{equation*}

To conclude the proof of the upper bound, it remains to show
\begin{equation*}
\liminf_{v\to\infty}\frac{1}{v}
\inf_{\gamma\in\Gamma}J_v(\gamma)
\ge c_2(\mu_1-2\epsilon)(1-\epsilon)
\end{equation*}
almost surely. 
Without the infimum over $\gamma$, the above is a consequence 
of the law of large numbers together with Lemma~\ref{good-is-typical}.
We indeed have the tail bound
\begin{equation}
\begin{split}
&Q\big(J_v(\gamma)< c_2(\mu_1-2\epsilon)(1-\epsilon)v\big)\\
&\quad \le Q\left(\sum_{j=0}^{v-1} 
1_{\{(j,\gamma_j)\textrm{ is $\epsilon$-good}\}}
<(1-\epsilon)v\right)\\
&\quad \le 
\left(\frac{Q\left((0,0)\textrm{ is not $\epsilon$-good}\right)}
{\epsilon}\right)^{\epsilon v}
\left(\frac{1}{1-\epsilon}\right)^{{(1-\epsilon)v-1}}
\end{split}\label{Bernstein}
\end{equation}
by Bernstein's inequality. The right-hand side 
is $o(\exp\{-cv\})$ for any $c>0$ when $R$ is sufficiently
large, due to Lemma~\ref{good-is-typical}. 
We show that the infimum has no effect by counting the number
of relevant $\gamma$'s. 
Obviously we can restrict our consideration to those $\gamma$ with 
\begin{equation*}
\sum_{j=0}^{v-1}(|\gamma_j-\gamma_{j+1}|_\infty-1)_+^\alpha
\le 2(\mu_1-2\epsilon)(1-\epsilon)v{/(C_5 R)}.
\end{equation*}
Since we can find $c\ge 1$ such that $x^\alpha\le c(x-1)_+^\alpha+c$
for $x\ge 0$, the above implies
\begin{equation*}
\sum_{j=0}^{v-1}d^{-\alpha}|\gamma_j-\gamma_{j+1}|_1^\alpha
\le 2 cv
\end{equation*}
for all sufficiently large $R>0$.
We bound the number of such sequences by
\begin{equation}
\begin{split}
&\# \set{(\gamma_0=0, \gamma_1,\ldots, \gamma_v): 
\sum_{j=0}^{v-1}d^{-\alpha}|\gamma_j-\gamma_{j+1}|_1^\alpha\le 2 cv}\\
&\le \# \set{(\gamma_0=0, \gamma_1,\ldots, \gamma_v): 
\sum_{j=0}^{v-1}\sum_{i=1}^d
|\gamma_j^{(i)}-\gamma_{j+1}^{(i)}|^\alpha\le c'v},
\label{number-of-chains}
\end{split}
\end{equation}
where $\gamma_j^{(i)}$ stands for $i$-th coordinate of $\gamma_j$. 
Indeed, when $\alpha\le 1$ this holds with $c'=2cd$ as a consequence of 
the concavity of $x\mapsto x^\alpha$
and, when $\alpha>1$ with $c'=2cd^{\alpha}$ by 
$\sum_{1\le i \le d}|x_i|^{\alpha}\le (\sum_{1\le i \le d}|x_i|)^{\alpha}$.
The right-hand side of~\eqref{number-of-chains} is nothing but the 
volume of 
\begin{equation*}
\bigcup_{x\in \Z^{dv}: |x|_\alpha^\alpha\le c'v}
x+[0,1]^{dv},
\end{equation*}
where $|x|_\alpha=(\sum_{i=1}^{dv}|x_i|^\alpha)^{1/\alpha}$.
As any point $y$ in {$x+[0,1]^{dv}$
} 
satisfies
\begin{equation*}
|y|_\alpha^\alpha
\le \sum_{j=1}^{dv}2^\alpha(|x_j|^\alpha+1)
\le 2^{\alpha+2} c'v, 
\end{equation*}
the right-hand side of~\eqref{number-of-chains} is bounded by the volume
of $l^\alpha$-ball in $\R^{dv}$ with radius 
$( 2^{\alpha+2}c'v)^{1/\alpha}$, which is
known to be
\begin{equation*}
\frac{(2(2^{\alpha+2}c'v)^{1/\alpha}\Gamma(1+1/\alpha))^{dv}}
{\Gamma(1+dv/\alpha)}.
\end{equation*}
One can check by using Stirling's formula that this is only 
exponentially large in $v$. Therefore, 
with the help of~\eqref{Bernstein}, 
we find that 
\begin{equation*}
 Q\left(\inf_{\gamma\in\Gamma}J_v(\gamma)
< c_2(\mu_1-2\epsilon)(1-\epsilon)v\right)
\end{equation*}
decays exponentially in $v$ when $R$ is sufficiently large. 
\end{proof}

{\begin{remark}
\label{positivity}
We explain how to modify the above block argument to prove $\mu_p>0$
for $p\in (0,1]$. We first replace the condition~(i) of the 
$\epsilon$-good box ($\epsilon\in(0,1)$) by
\begin{equation*}
\Phi_R(\omega_p-(k,R^M x))\ge \epsilon
\end{equation*} 
and drop (ii). With this modified definition of $\epsilon$-good box,
it is simple to check that the following variant of
Lemma~\ref{good-is-typical} holds for general $p\in (0,1]$: 
\begin{equation*}
 \lim_{\epsilon\downarrow 0}\limsup_{R\to\infty}Q((k,x) \textrm{ is }
\epsilon\textrm{-good})=1.
\end{equation*}
Next we replace $J_v(\gamma)$ for $\gamma\in\Gamma$ by
\begin{equation*}
 J'_v(\gamma)=\sum_{j=0}^{v-1}\max\left\{\epsilon1_{\{(k,x)
\textrm{ is $\epsilon$-good}\}}, C_5'R(|\gamma_j-\gamma_{j+1}|_\infty
-1)_+^\alpha\right\}.
\end{equation*}
If $C_5'$ is sufficiently small, we can easily verify that any 
minimizing path $\pi_n$ for $T_n(\omega_p)$ with 
$\pi_n(jR)\in C_p(\gamma_j)$ ($0\le j\le n/R$) has  passage time 
larger than $J'_v(\gamma)$. 
Therefore we get
\begin{equation*}
T_n(\omega_p)\ge \inf_{\gamma\in\Gamma} J'_{[n/R]-1}(\gamma)
\end{equation*}
and, when $R\in\N$ is chosen sufficiently large and $\epsilon$ small, 
we have
\begin{equation*}
 \liminf_{v\to\infty}\frac1v\inf_{\gamma\in\Gamma} J'_v(\gamma)>0
\end{equation*}
in exactly the same way as above. 
\end{remark}
}

\section*{Appendix}
We provide a proof of~\eqref{lb} for completeness. 
We consider $d=1$ case first since the other case will 
reduce to it. 
Set
\begin{equation*}
 \mathcal{L}=\{(m,x)\in \N \times \Z: m+x\in 2\Z\}. 
\end{equation*}
For $R>0$ and $(m,x)\in \mathcal{L}$, we say 
$(m,x)$ is {\em open} if there exists a $y_m\in Rx+(-R,R)\cap\Z$
such that $\eta(m,y_m)=0$. 
It is easy to see that
\begin{equation*}
 Q((m,x)\textrm{ is open})
 \to 1 
\end{equation*}
as $R\to\infty$. Thus when $R$ is large, 
the directed site percolation on $\mathcal{L}$
is supercritical and we can find a percolation point
$(1,x)\in \mathcal{L}$. 
This implies that there exists a path
$\{(k,y_k)\}_{k\ge 1}$ satisfying
\begin{equation*}
 \eta(k,y_k)=0\textrm{ and }|y_{k+1}-y_{k+2}|\le 3R
\end{equation*}
for all $k\ge 1$. Then it follows that
\begin{equation*}
\begin{split}
\liminf_{n\to\infty}\frac{1}{n}\log Z_n^{\eta,-\infty}
&\ge \liminf_{n\to\infty}\frac{1}{n}\log P(X_k=y_k\textrm{ for all }
 k\le n)\\
&\ge -c_2 3^\alpha R^\alpha.
\end{split}
\end{equation*}
For the case $d\ge 2$, we have 
\begin{equation*}
 Z_n^{\eta,-\infty}\ge P(H_n^\eta=0 \textrm{ and } 
X_k \in \Z\times \{0\}^{d-1}\textrm{ for all }1\le k\le n)
\end{equation*}
and the right-hand side can be bounded from below in the same way 
as for $d=1$.

\section*{Acknowledgements}
The first author was partially supported by CNRS, UMR 7599.
The second author was supported by JSPS KAKENHI Grant Number 24740055. 
The fourth author was supported by JSPS KAKENHI Grant Number 25400136.

\newcommand{\noop}[1]{}


\end{document}